\renewcommand{\i}       {\underline{i}}
\renewcommand{\j}       {\underline{j}}
\renewcommand{\k}       {\underline{k}}
\newcommand{\eps}       {\varepsilon}
\newcommand{\la}        {\lambda}
\newcommand{\R}         {\mathbb{R}}
\renewcommand{\H}       {\mathbb{H}}
\newcommand{\C}         {\mathbb{C}}
\newcommand{\E}         {\mathbb{E}}
\renewcommand{\P}       {\mathbb{P}}
\newcommand{\Z}         {\mathbb{Z}}
\renewcommand{\epsilon} {\varepsilon}
\newtheorem{lemma} {Lemma} [section]
\newtheorem{prop} [lemma] {Proposition}
\newtheorem{theo} [lemma] {Theorem}
\newtheorem{defin}[lemma] {Definition}
\newtheorem{rmk}{Remark}
\begin{document}

\title{The cobordism class of the moduli space of polygons in $\R^3$}
\author{Alessia Mandini
\thanks{Partially supported by the Funda\c{c}\~{a}o para a Ci\^{e}ncia e a Tecnologia through the Program POCI 2010/FEDER.}}

\maketitle

{\bf Abstract.} For any vector $r=(r_1,\ldots, r_n)$, let $M_r$ denote the moduli space
(under rigid motions) of polygons in $\R^3$ with $n$-sides whose
lengths are $r_1,\ldots,r_n$.  We give an explicit characterization
of the oriented $S^1$-cobordism class of $M_r$ which depends uniquely on the
length vector $r$.

\section{Introduction}
The study of the geometry of moduli spaces of polygons with fixed side lengths $r_1, \ldots, r_n$ in the Euclidean space has raised, since the 1990's, a remarkable interest in symplectic geometry.  These moduli spaces have a very rich structure; they can be described (in two possible ways) as symplectic quotients: see for example \cite{km} where  Kapovich and Millson show that these spaces are complex-analytic spaces and they define and study the Hamiltonian flows on $M_r$ obtained by bending polygons along diagonals. Another description of $M_r$ as a symplectic reduction is given by Hausmann and Knutson \cite{hk1}, who also give a useful geometric interpretation of the bending action. 

 Let  $\mathcal{S}_r = \prod_{j=1}^n S_{r_j}^2$ be the product of $n$ spheres of radii $r_1, \ldots ,r_n $ respectively; $\mathcal{S}_r$ is a symplectic manifold and a Hamiltonian $SO(3)$-space with associated moment map 
\begin{displaymath}
\begin{array}{rcl}
\mu : \mathcal{S}_r& \rightarrow &\textrm{Lie}(SO(3))^* \simeq \R^3\\
\vec{e}= (e_1, \ldots, e_n) & \mapsto & e_1 + \ldots + e_n.\\
\end{array}
\end{displaymath}
For a (suitably chosen) length vector $r=(r_1, \ldots, r_n)\in\R^n_+$ the symplectic quotient $\mathcal{S}_r / \! \! /SO(3)$ at the 0-level set is a smooth manifold, and it is defined to be the moduli space $M_r$ (Kapovich--Millson \cite{km}). Note that the condition $\mu(\vec{e})=0$ is the closing condition for a polygon with edge vectors $e_1, \ldots, e_n$ starting at an arbitrary base-point. Thus $M_r$ can be identified with the set of polygons in $\R^3$, with $n$ sides of lengths $r_1,\ldots, r_n$, modulo rigid motions.

$M_r$ can also be described as the symplectic reduction for the natural action of the torus $U_1^n$, of diagonal matrices in the unitary group $U_n$, on the complex Grassmannian of 2-planes $Gr_2(\mathbb{C}^n)$ (Hausmann--Knutson \cite{hk1}); the moment map $\mu_{_{U^n_1}}: Gr_{2,n} \rightarrow \R^n$ associated to this Hamiltonian action maps the plane $ \langle a,b \rangle$  generated by the vectors $a,b \in \C^n$ into $\mu_{_{U^n_1}}(\langle a,b \rangle )= (|a_1|^2 + |b_1|^2, \ldots , |a_n|^2 + |b_n|^2).$ Then $M_r$ is the topological quotient $\mu_{_{U^n_1}}^{-1}(r)/U^n_1.$ 

The main result of this paper (Theorem \ref{t1}) is an explicit characterization of the oriented $S^1$-cobordism
class of $M_r$ which depends uniquely upon a special family of index sets defined as follows:

\begin{defin}\label{df}
For each index set $I \subset \{1, \ldots, n-2 \}$ let $\eps_i=1 $ if $i \in I$ and $\eps_i=-1 $ if $i \in I^c:= \{1, \ldots , n-2\} \setminus I$. An index set $I$ is said to be \emph{$r$-admissible} (or \emph{triangular}, as in \cite{AG}) if and only if the following inequalities hold:
\begin{equation}\label{tr}
\left\{ \begin{array}{l}
\sum \eps_i r_i + r_{n-1} -r_n >0\\
\sum \eps_i r_i - r_{n-1} +r_n >0\\
-\sum \eps_i r_i + r_{n-1} +r_n >0.\\
\end{array} \right.
\end{equation}

\end{defin}

We denote by $\mathcal{I}_r$ the set of all $r$-admissible $I.$  Moreover, if $M$ is a smooth oriented manifold,  we will denote by $-M$ the same manifold with opposite orientation and by $\amalg$ the disjoint union (or topological sum) of smooth manifolds.

\begin{theo}\label{t1}
Let  $r \in \R_+^n$ be such that $M_r $ is a smooth manifold and there exists $i,j \in \{1, \ldots,n \}$ such that $r_i \neq r_j.$ Then the following oriented $S^1$-cobordism holds $$ M_r \sim \coprod_{\substack{I \in \mathcal{I}_r \\ \ell= |I| }} (-1)^{n- \ell} \C\P^{n-3},$$ 
where $M_r$ carries the bending action associated to $r_i $ and $r_j$ and the projective spaces $\C\P^{n-3}$ carry the standard projective $S^1$-action.
In particular $M_r \sim 0$ if $n$ is even.
\end{theo}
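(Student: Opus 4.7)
My approach is to obtain the cobordism by a wall-crossing argument as the length vector $r$ is varied, together with an analysis of the bending $S^1$-action. The key point is that the $S^1$-equivariant cobordism class of $M_r$ changes across walls in the $r$-parameter space, and these changes are copies of $\C\P^{n-3}$ indexed by $r$-admissible sets.

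After relabeling, assume the bending $S^1$-action is the one associated to the pair $(r_{n-1}, r_n)$: it rotates the arm $(e_1, \ldots, e_{n-2})$ about the axis of the diagonal $d = e_1 + \cdots + e_{n-2}$, with moment map $\phi = \tfrac{1}{2} |d|^2$ on $M_r$. Observe that the $r$-admissibility of an index set $I \subset \{1, \ldots, n-2\}$ is equivalent to the existence of a collinear configuration of the arm with signs $\eps_i$ determined by $I$, together with a non-degenerate triangle $(e_{n-1}, e_n, -d)$ closing the polygon, where $|d| = \sum_i \eps_i r_i > 0$; indeed, combining the three inequalities in (\ref{tr}) gives exactly $\sum_i \eps_i r_i > 0$ together with the strict triangle inequalities for $(|d|, r_{n-1}, r_n)$. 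These are precisely the degenerate configurations that appear as the relevant flip loci in the wall-crossing picture.

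To produce the cobordism I would deform the length vector along a one-parameter family $r(s)$, $s \in [0,1]$, with $r(1) = r$ chosen so that $M_{r(0)} = \emptyset$ (for example by shrinking $r_{n-1}$ and $r_n$ past the point where the polygon fails to close, or by any transverse deformation exiting the moment polytope of the Grassmannian reduction). Away from a finite set of critical parameters, the family $\{M_{r(s)}\}$ is a smooth $S^1$-equivariant oriented cobordism. At each critical parameter, $r(s)$ hits a wall of the form $\{\sum \eps_i r_i = 0\}$, corresponding to a collinear polygon with sign pattern $\eps$; those wall-crossings whose pattern yields an $r$-admissible $I$ contribute non-trivially, while the remaining walls give trivial changes. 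Using the Hausmann--Knutson description $M_r = Gr_2(\C^n) / \! \! / U_1^n$, the contribution at each admissible wall is identified as the exceptional locus of an abelian symplectic flip, namely a copy of $\C\P^{n-3}$ with the standard projective $S^1$-action; the orientation sign is $(-1)^{n-|I|}$, coming from the parity of the number of reversed edges in the sign pattern $\eps$.

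Summing these contributions along the path $r(s)$ yields the claimed cobordism $M_r \sim \coprod_{I \in \mathcal{I}_r} (-1)^{n-|I|} \C\P^{n-3}$. The principal obstacle is the wall-crossing calculation itself: choosing a path $r(s)$ for which only $\mathcal{I}_r$-indexed walls contribute, and identifying each such contribution as $(-1)^{n-|I|} \C\P^{n-3}$ with the correct equivariant structure. This is cleanest in the Hausmann--Knutson model, where wall-crossing becomes an abelian operation and the exceptional locus is manifestly a projective space. The final statement $M_r \sim 0$ for $n$ even then follows because $\C\P^{n-3}$ is in that case of odd complex dimension and is $S^1$-nullcobordant for the standard projective action (for instance, it bounds equivariantly a suitable disk bundle compatible with the $S^1$-action).
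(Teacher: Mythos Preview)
Your approach is genuinely different from the paper's, and it has a real gap. The paper does \emph{not} vary $r$ at all: it applies the Ginzburg--Guillemin--Karshon localization theorem directly to the fixed space $M_r$ with the bending $S^1$-action along $\mu_{n-3}=e_1+\cdots+e_{n-2}$. The fixed locus splits into (i) isolated points, where $e_1,\ldots,e_{n-2}$ are collinear, and (ii) codimension-one submanifolds, where $e_{n-1},e_n$ are collinear. The set $\mathcal{I}_r$ is \emph{exactly} the index set of the isolated fixed points: an $I\subset\{1,\ldots,n-2\}$ is $r$-admissible iff the collinear arm with sign pattern $\eps_I$ closes into a nondegenerate triangle with $e_{n-1},e_n$. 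The codimension-one components contribute $\C\P^1$-bundles, which bound the associated disk bundles and hence give $0$. The bulk of the paper's proof is then the sign computation: at each isolated fixed point one compares the complex structure $A$ induced by the linearized bending action with the symplectic complex structure $J$, and finds $\mathrm{sgn}\det(M_{\mathcal B_2\mathcal B_1})=(-1)^{n-\ell}$.

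Your wall-crossing outline conflates two different objects. The walls in the $r$-parameter space are the hyperplanes $\sum_{i=1}^{n}\eps_i r_i=0$ (all $n$ signs), where $M_r$ becomes singular. The set $\mathcal{I}_r$, by contrast, is cut out by \emph{strict inequalities} over the first $n-2$ indices together with $r_{n-1},r_n$; it labels points in $M_r$, not hyperplanes in parameter space. There is no evident bijection between ``walls crossed along a path $r(s)$ from $\emptyset$ to $r$'' and $\mathcal{I}_r$, and you acknowledge this yourself as the ``principal obstacle'' without resolving it. In particular, which walls are crossed is path-dependent, while $\mathcal{I}_r$ depends only on the endpoint; turning that into the stated formula would itself require a localization-type argument. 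Finally, your sign assertion ``$(-1)^{n-|I|}$ from the parity of reversed edges'' is precisely the content that the paper spends most of its effort establishing via an explicit comparison of complex structures; stating it without computation leaves the orientation part unproved.
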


The bending action has been introduced by Kapovich-Millson \cite{km} and is described in detail in Section \ref{bending}. The geometrical idea underlying its construction the following: let $P$ be a $n-$gon and $\mu_k$ its $k$-th diagonal, i.e. $\mu_{k}= e_1+\cdots+e_{k+1}.$ Consider the surface $S$ bounded by $P; $ $S$ is the union of the triangles $\Delta_1, \ldots, \Delta_n$ where $\Delta_j $ has edges $\mu_{j-1}, e_{j+1}, \mu_j.$ Each (nonzero) diagonal breaks $S$ in two pieces, $S'$ and $S'',$ $S'$ being the union of $\Delta_1, \ldots, \Delta_k$ and $S''$ the union of the remaining ones. The bending action along the $k$-th diagonal is the $S^1$-action which bends $S'$ along $\mu_k$ and let $S''$ fixed.

The bending along a diagonal $\mu_k$ defines an $S^1$-action on the whole $M_r$ when $\mu_k(P)\neq 0$ for all $P \in M_r,$ see Section \ref{bending} The proof of Theorem \ref{t1} takes in consideration the bending along the last diagonal $\mu_{(n-3)},$ which has never length 0 if $r_{n-1} \neq r_n.$ Since $M_r$ is symplectomorphic to $M_{\sigma(r)}$ for any permutation $\sigma$ on the $n$ edges, we can refer to this situation anytime there exists  $i,j \in \{1, \ldots,n \}$ such that $r_i \neq r_j.$ By bending action associated to $r_i$ and $r_j$ we mean the well defined $S^1$-action of bending along $\mu_{(n-3)}$ in $M_{\sigma(r)},$ where $\sigma$ is any permutation that takes $r_i$ and $r_j$ in the last two positions.

Note that if $r_i=r_j$ for all $i, j \in \{1, \ldots , n \}$ (equilateral case) it is not possible to define an $S^1$-action on the whole $M_r$ by bending. Still it is enough to perturb the edges, for example considering $(r_1, \ldots, r_1+\epsilon)$ for arbitrarly small $\epsilon,$ and Theorem \ref{t1} applies. For equilateral $n$-gons, for $n$ odd, Kamiyama \cite{kamiyama} proved a cobordism result using different techniques (note that the equilateral case for even number of edges is always degenerate).

Precisely, he proves that $M_{(1, \ldots, 1)}$ is cobordant to $ (-1)^{m+1} {2m-1 
\choose m} \, \C\P^{2m-2},$ where the number of edges is $n= 2m+1.$ Applying Theorem \ref{t1} to $M_{(1, \ldots, 1, 1+\epsilon)}$ and formally taking the limit for $\epsilon \rightarrow 0,$ one recovers Kamiyama result. In fact, in the equilateral case $\mathcal{I}_r = \{ I \subset \{1, \ldots, n-2 \} \mid |I|= \frac{n-1}{2} \},$ so the orientation of each projective space in Theorem \ref{t1} is $(-1)^{n-\frac{n-1}{2} } = (-1)^{m+1}.$ Moreover, $|\mathcal{I}_r|= {n-2 \choose \frac{n-1}{2} } = {2m-1 \choose m}. $

The proof of Theorem \ref{t1} is based on cobordism results presented by  Ginzburg, Guillemin and Karshon (\cite{cobordism,ggk}). They show that if $M$ is a smooth oriented $2d$-dimensional manifold endowed with a semi-free $S^1$-action, then the $S^1$-oriented cobordism class of $M$ depends only on the fixed point set $(M)^{S^1}.$ Precisely, (finitely many) isolated fixed points contribute to the cobordism class of $M$ with a copy each of the complex projective space $\pm \C\P^d;$ each $k$-codimensional submanifold $X_k$ of fixed points, $k=1, \ldots, N$  contributes to the cobordism class of $M$ with the total space $B_k$ of a fibration $\xymatrix@1{B_k \ar[r]^{\C\P^k} & X_k }$ over $X_k$ with fiber $\C\P^k.$

The $S^1$-action of bending along a proper (i.e. not an edge) diagonal is a quasi-free $S^1$-action on $M_r$ and satisfies the hypothesis of the cobordism theorems just described. The proof of Theorem \ref{t1} is based on the idea, of Migliorini and Reznikov, to analyze the fixed point set of the bending action to calculate the cobordism class of $M_r.$ Precisely, we first show that submanifolds of fixed points do not contribute to the cobordism class of $M_r.$ Then only the isolated fixed points are relevant to determine the class of $M_r,$ and the proof continues with a thorough analysis of the orientation induced from the infinitesimal generator of the bending action on the $\C\P^d$ associated to each fixed point. While writing the paper the author was made aware of \cite{haus} and acknoledges that the computation of the orientation of these projective spaces might equivalently have been done applying results therein.

The layout of the paper is as follows: we first define the moduli space of polygons, both as a symplectic reduction of a product of spheres (cf. Section \ref{inizio}) and of the Grassmannian (cf. Section \ref{PandG}). Also in Section \ref{bending} we recall some important facts on the bending action.
Then we define the Hamiltonian cobordism class that we are studying and state the results on which our proof is based (see Section \ref{cobordism}). Finally, in Section \ref{dim}, we give the proof of our main theorem. In Section \ref{es_cob} we analyze in detail the case $n=5,$ giving an example for each cobordism type. 
\vspace{0.5cm}

{\bf Acknowledgements.} This work has been developed during my Ph.D. studies under the direction of Luca Migliorini, to whom I am extremely grateful for  introducing me to this subject, and for the guidance and support during these years. Also, I would like to thank Leonor Godinho for her comments on an earlier version of this work, Elisa Prato and Gabriele Vezzosi for suggestions. I am grateful to the referee for useful remarks that also led to considerations on complex cobordism, see Remark \ref{complex}. Finally, I thank the Department of Mathematics of the University of Bologna for partial financial support.

\section{The moduli space of polygons}\label{inizio}

An $n$-gon $P$ in the Euclidean space $\E^3$ is determined by its $n$ vertices $v_1, \ldots, v_n$ joined by the oriented edges $e_j= v_{j+1}
- v_j$ ($e_n=v_1-v_n$). A polygon is said to be degenerate if it lies on a line. Let $\mathcal{P}_n$ be the space of all $n$-gons in $\E^3$: two polygons $P=(v_1, \ldots,v_n)$ and $Q=(w_1,\ldots ,w_n)$ are identified if there exists an orientation preserving isometry $g$ of $\E^3$ such that $g(v_i)= w_i$ for $1\leq i\leq n$. For $r= (r_1, \ldots, r_n) \in \R^n_+$, the moduli space $M_r$ is defined to be the space of $n$-gons with fixed side lengths  $r_1, \ldots, r_n$ modulo isometries as above.

The group $\R_+$ acts on $\mathcal{P}_n$ by scaling and this induces an isomorphism $M_r \cong M_{\lambda r}$ for each $\lambda$ in $\R_+.$ Moreover, the group $S_n$ of permutations on $n$ elements  acts on $\mathcal{P}_n$ by permuting the order of the edges, inducing an isomorphism between $M_r$
and $M_{\sigma(r)}$ for each $\sigma \in S_n.$ 

Let $S^2_t$ be the sphere in $\R^3$ of radius $t$ and center the origin.  For $r= (r_1, \ldots, r_n) \in \R^n_+,$ the product $\mathcal{S}_r = \prod_{j=1}^n S_{r_j}^2$ of $n$ copies of spheres is a smooth manifold which can be endowed with a symplectic structure: if $p_j\colon \mathcal{S}_r \rightarrow S_{r_j}^2 $ is the projection on the $j$-th factor and $\omega_j $ is the volume form on the sphere $S_{r_j}^2,$ then the $2$-form $\omega= \sum_{j=1}^n \frac{1}{r_j} p_j^* \omega_j$ on  $\mathcal{S}_r$ is closed and non-degenerate and $(\mathcal{S}_r, \omega) $ is a symplectic manifold.
The group $SO(3)$ acts diagonally on $\mathcal{S}_r$ or, equivalently,
identifying the sphere $S_{r_j}^2$ with a $SO(3)$-coadjoint orbit, the $SO(3)$-action on each sphere is the coadjoint one. The choice of an invariant inner product on the Lie algebra $\mathfrak{so}(3)$ of $SO(3)$ induces an identification $\mathfrak{so}(3)^* \simeq \R^3$ between the dual of $\mathfrak{so}(3)$ and $\R^3.$ So, on each single sphere $S^2_{r_j},$ the moment map associated to the coadjoint action is the inclusion of $S^2_{r_j}$ in $\R^3.$ It follows that the diagonal action of $SO(3)$ on $\mathcal{S}_r$ is still Hamiltonian and, by linearity, it has moment map 
\begin{displaymath}
\begin{array}{rcl}
\mu : \mathcal{S}_r& \rightarrow &\R^3\\
\vec{e}= (e_1, \ldots, e_n) & \mapsto & e_1 + \cdots + e_n.\\
\end{array}
\end{displaymath}
The level set $\mu^{-1}(0):= \tilde{M_r}= \{ \vec{e}=(e_1, \ldots, e_n) \in \mathcal{S}_r : \sum_{i=1}^n e_i =0 \}$ is a submanifold of $\mathcal{S}_r$ because $0$ is a regular value for $\mu.$

Intuitively, if we think at the $e_j$'s as edges of a ``broken line'' $P$ starting at some point in $\R^3$, then the condition $\sum_{i=1}^n
e_i =0$ is the closing condition for $P$ making it a polygon
in $\R^3$. Thus the topological quotient $\tilde{M_r}/SO(3)$ is the moduli space $M_r$ of $n$-gons of fixed side lengths $r$ modulo rigid motions and $M_r$ is realized as the symplectic quotient $\mathcal{S}_r  / \!\! / SO(3)$. 

Kapovich and Millson (\cite{km}) proved that $M_r$ is a smooth manifold if and only if the vector of lengths $r$ does not admit degenerate polygons. Note that  the existence of degenerate polygons in $M_r$ translates into the existence of a partition  $I_1=\{i_1, \ldots, i_s \}$ and $I_2=\{i_{s+1}, \ldots , i_n\}$ of $ \{1, \ldots,n\}$ such that $r_{i_1}+ \cdots +r_{i_s}-r_{i_{s+1}}- \cdots -r_{i_n}=0,$ and thus it is actually a condition on the lengths $r_i.$

If $r \in \R^n_+$ is such that in $M_r$ there exist polygons on a line, then $M_r$ has singularities, which have been studied by Kapovich and Millson in \cite{km}. Precisely, they proved that $M_r$ is a complex analytic space with isolated singularities corresponding to the degenerate $n$-gons in $M_r,$ and these singularities are equivalent' to homogeneous quadratic cones. 

\begin{rmk} Observe that for $\vec{e} \in \tilde{M}_r$ and and $u, v
  \in T_{\vec{e}}\tilde{M}_r$, the formulas $$\! \langle u,v \rangle \! =\! \sum_{j=1}^n \frac{1}{r_j} \langle u_j,v_j \rangle_S, \quad
\omega(u,v)\!= \! \sum_{j=1}^n \langle \frac{e_j}{r_j^2} , u_j \wedge v_j \rangle_S, \quad
J(u)=\!(\ldots,\frac{e_j}{r_j} \wedge u_j,\ldots)$$ 
(where $\langle \, , \rangle_S $ is the standard scalar product in $\R^3$) are
$SO(3)$-invariant, and determine an inner product $\langle \, , \rangle,$ a symplectic form $\omega$, and a complex structure $J$ on $M_r$.
\end{rmk}

\subsection{The Bending Action}\label{bending}
In this section we describe bending flows introduced by Kapovich and Millson in \cite{km}. For each $\vec{e}=(e_1, \ldots,e_n) \in \tilde{M}_r$ let $\mu_k(\vec{e}):= \mu_k $ be its $k$-th diagonal.
The function $f_k(\vec{e})= \frac{1}{2} \| \mu_k\|^2$ is $SO(3)-$invariant, and it will be identified with the function it induces on the quotient space $M_r.$ From now on the construction will depend only formally on the representative of the classes, and $SO(3)$-invariance should be kept in mind. The bending flow around the $k$-th diagonal is the Hamiltonian flow  $\varphi_k^t$ of the Hamiltonian vector field $H_{f_k}$
$$H_{f_k} (e_1, \ldots, e_n )= (\mu_k \wedge e_1, \ldots, \mu_k \wedge e_{k+1}, 0, \ldots, 0) $$
associated to the function $f_k.$

In \cite{km} Kapovich and Millson  prove that $\varphi_k^t$ maps a polygon $P$ of edges $e_1, \ldots, e_n$ into the polygon $\varphi_k^t(P)$  of edges $e_1(t), \ldots, e_n(t),$ where
 \begin{displaymath}\label{system2}
\left\{ \begin{array}{ll}
e_i(t)= \exp(t \,\mathrm{ad}_{\mu_k}) e_i \quad & 1 \leq i \le k+1\\
e_i(t)=e_i , \quad & k+2\le i \le n.\\
\end{array} \right.
\end{displaymath}
From now on we will denote by $\beta_k$ the $S^1$-action just described of bending along the $k$-diagonal. 

Let $\ell_k:M_r \rightarrow \R$ be the function that associates to each polygon $P=\vec{e}$ the length of its $k$-th diagonal, i.e. $\ell_k (P) = \| e_i+ \ldots + e_{k+1}\|, $ then the curve $\varphi_k^t(P)$ is periodic of period $2 \pi/ \ell_k(P)$ if $\ell_k(P)\ne 0,$ otherwise $P$ is a fixed point for $\varphi_k^t$ and the flow  $\varphi_k^t(P)$ has infinite period. It is possible to normalize the flow so that the bending action bends polygons with constant velocity up to excluding the polygons $P$ such that $\ell_k(P)=0.$ Let $M'_r$ be the open subset of $M_r$ consisting of those polygons (called \emph{prodigal}) such that no diagonal $\mu_i$ has zero length; the choice of a system of $n-3$ non intersecting diagonals in $M'_r$ allows one to define an action $\beta$  of a $(n-3)$-dimensional $T^{n-3}$ torus on $M'_r$ by applying progressively the bending actions $\beta_1, \ldots, \beta_{n-3};$ $\beta $ will be called the (toric) bending action. 

Restricting to the dense open subset $M^0_r \subset M'_r$ of polygons such that, for each $i,$ the $i$-th diagonal $\mu_i$ is not collinear to $e_{i+1},$ Kapovich and Millson showed in \cite{km} that this system is completely integrable and  introduced on $M^0_r$ action-angle coordinates. Precisely, the action coordinates are the lengths $\ell_i$ of the diagonals and the angle coordinates are $\theta_i = \pi- \hat{\theta}_i,$ where $\hat{\theta}_i$ is the dihedral angle between $\Delta_i$ and $\Delta_{i+1}.$ (Note that under the hypothesis that no $\mu_i$ is collinear to $e_{i+1}$ none of the $\Delta_i$ is degenerate, thus all the $\theta_i $ are well defined). 

Thus the moment map for the bending action $\beta$ is 
\begin{displaymath}\label{MB} \begin{array}{rcl}
\mu_{_{T^{n-3}}} : M_r & \rightarrow & (\mathfrak{t}^{n-3})^* \simeq \R^{n-3}\\
\vec{e}& \mapsto & (\ell_1(\vec{e}), \ldots,\ell_{n-3}(\vec{e}) ).
\end{array} \end{displaymath}

\begin{rmk}
If $n=4,5,6$ then $M_r $ is toric for generic $r$'s (i.e. for $r$'s such that no degenerate polygons are possible), see \cite{km}.
\end{rmk}

\subsection{Polygon spaces and Grassmannians }\label{PandG}
    
In this section we will briefly overview the description of the moduli space $M_r$ of polygons as the symplectic reduction of the Grassmannian of 2-planes in $\C^n$ by the action of the maximal torus $U_1^n$ of diagonal matrices in $U_n$. This description has been introduced by Hausmann and Knutson in \cite{hk1} and has been used by them (also) to give a nice description of the bending action as the residual torus action coming from the Gel'fand--Cetlin system on $Gr_{2,n}.$ This approach made it possible to study wall-crossing problems and to give an alternative description of the cohomology ring $H^*(M_r)$ (which has been originally computed by Hausmann and Knutson \cite{hk}) by applying the Duistermaat--Heckman Theorem. These results will appear in a further paper \cite{io}. 

The diagonal action of the maximal torus $U^n_1$ on $Gr_{2,n}$ is Hamiltonian with associated moment map $\mu_{_{U^n_1}}: Gr_{2,n} \rightarrow \R^n$ such that, if $\Pi= \langle a,b \rangle$ is the plane generated by $a,b \in \C^n,$ then $$ \mu_{_{U^n_1}}( \Pi)= (|a_1|^2 + |b_1|^2, \ldots , |a_n|^2 + |b_n|^2).$$
Then the image of the moment map $\mu_{_{U^n_1}} (Gr_{2,n})$ is the hypersimplex $\Xi$
$$\mu_{_{U^n_1}} (Gr_{2,n})= \Xi = \Big\{ (r_1, \ldots, r_n) \in \R^n | 0 \leq r_i \leq 1, \quad \sum_{i=1}^n r_i =2 \Big\} $$
and the set of critical values of $\mu_{_{U^n_1}}$ consists of those points $ (r_1, \ldots, r_n) \in \Xi$ satisfying one of the following conditions
\begin{itemize}
\item[a)] one of the $r_i$'s vanishes or is equal to 1;
\item[b)] there exists $\epsilon_i = \pm 1$ such that $\sum_{i=1}^n \epsilon_i r_i=0$ with at least two $\epsilon_i$'s for each sign.
\end{itemize}
Note that points satisfying a) constitute the boundary of $\Xi,$ while points satisfying condition b) are the inner walls of $\Xi.$ 

From the identification of the bending flows with the residual torus action coming from the Gel'fand--Cetlin system ( \cite{hk1} Theorem 5.2), Hausmann and Knutson prove that the action coordinates $\ell_1, \ldots, \ell_{n-3}$ satisfy the system 
\begin{equation}\label{GC}
\left\{\begin{array}{c}
r_{i+2} \leq \ell_i+ \ell_{i+1}\\
\ell_i \leq r_{i+2}+ \ell_{i+1}\\
\ell_{i+1} \leq r_{i+2} + \ell_i.\\
\end{array} \right.
\end{equation}

In the case $n=5 $ the choice of the two (proper) diagonals from the first vertex, i.e. $\mu_1= e_1+e_2$ and $\mu_2= e_1+e_2+e_3= -(e_4+e_5),$ allows us to define a toric bending action. The moment polytope $\mu_{T^2}(M_r)$ associated to this bending action is the intersection $\mu_{_{T^{2}}}(M_r)= I \cap \Upsilon$ where $I$ is the rectangle $$I = \Big[ |r_1-r_2|, r_1+r_2 \Big] \times \Big[ |r_4-r_5|, r_4+r_5 \Big]$$ and $\Upsilon$ is the region 
$$ \Upsilon = \{ (x,y ) \in \R^2 : y\geq -x+r_3; \, y \geq x-r_3; \, y \leq x+r_3 \}.$$

\begin{figure}[htbp]
\begin{center}
\psfrag{a}{\footnotesize{$y= | r_4-r_5|$}}
\psfrag{b}{\footnotesize{$y= r_4+r_5$}}
\psfrag{c}{\footnotesize{$x= | r_1-r_2|$}}
\psfrag{d}{\footnotesize{$x= r_1+r_2$}}
\psfrag{1}{\footnotesize{$y=x+r_3$}}
\psfrag{2}{\footnotesize{$y=-x+r_3$}}
\psfrag{3}{\footnotesize{$y=x-r_3$}}
\psfrag{x}{\footnotesize{$x$}}
\psfrag{y}{\footnotesize{$y$}}
\includegraphics[width=7cm]{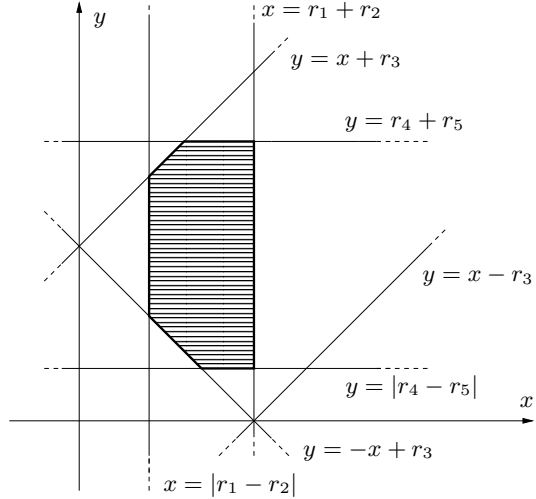}
\end{center}
\caption{$\mu_{{T^2}}(M_r)$}
\label{es0}
\end{figure}

For some examples we refer to section \ref{es_cob}.

\section{Cobordism of Polygon Spaces }
In Section \ref{cobordism} we state $S^1$-equivariant cobordism results due to Ginzburg, Guillemin and Karshon \cite{cobordism}. In Section \ref{dim} we apply these to the moduli space of polygons $M_r$ endowed with $S^1$-action of bending along a (chosen) diagonal.

\subsection{$S^1$-equivariant cobordism}\label{cobordism}
In this paper we investigate the $S^1$-cobordism class of the moduli space of polygons $M_r.$ Our proof will be based on Theorems \ref{theo.cob} and \ref{theo.cob2}) due to Ginzburg, Guillemin and Karshon \cite{cobordism} (see also \cite{ggk}). 
Martin \cite{Martin} also proved similar cobordism results.

\begin{theo} \label{theo.cob} \textnormal{(V.~Ginzburg, V.~Guillemin,
Y.~Karshon)}

 Let $M$ be an oriented $2d$-dimensional manifold
  on which the group $S^1$ acts. Suppose that this action is quasi-free
  and has finitely many fixed points. Then $M$ is cobordant a disjoint
  union of $N$ copies of $\pm \C\P^d,$ where $N$ is the number of fixed
  points.
\end{theo}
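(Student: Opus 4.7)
The strategy is to combine the equivariant slice theorem with an equivariant surgery argument, exploiting that $S^1$ acts freely off the finite fixed point set in order to reduce the cobordism class of $M$ to a sum of local contributions, one per fixed point, each representable by a signed copy of $\C\P^d$.

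First I would analyze the local structure at each fixed point $p_i$. By the equivariant slice theorem, an $S^1$-invariant neighborhood of $p_i$ is equivariantly diffeomorphic to a ball in the tangent representation $T_{p_i}M$. Quasi-freeness together with the isolation of $p_i$ force every weight of the $S^1$-action on $T_{p_i}M$ to lie in $\{+1,-1\}$, so there is an invariant splitting $T_{p_i}M=V_i^+\oplus V_i^-$ into weight $+1$ and weight $-1$ subspaces. Equipping $V_i^\pm$ with invariant complex structures yields a preferred orientation on $T_{p_i}M$; comparing with the given orientation of $M$ at $p_i$ produces a sign $\eps_i\in\{\pm 1\}$.

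Next I would construct a standard model for each $p_i$. Equip $\C\P^d$ with an $S^1$-action of the form
$$\lambda\cdot[z_0:z_1:\cdots:z_d]=[z_0:\lambda^{\delta_{i,1}}z_1:\cdots:\lambda^{\delta_{i,d}}z_d],\qquad \delta_{i,j}\in\{\pm 1\},$$
chosen so that the tangent representation at $q_0=[1:0:\cdots:0]$ is $S^1$-equivariantly isomorphic to $V_i^+\oplus V_i^-$. Denote by $\mathcal{W}_i=\eps_i\C\P^d$ this oriented $S^1$-manifold. Using the slice identifications, remove small invariant disks around $p_i\in M$ and $q_0\in -\mathcal{W}_i$ and glue the complements along their common free-$S^1$ boundary sphere $S(V_i^+\oplus V_i^-)$; call the result $\widehat M$. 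The trace of this equivariant surgery produces an oriented $S^1$-cobordism
$$M\;\amalg\;\coprod_{i}(-\mathcal{W}_i)\;\sim\;\widehat M,$$
so that $[M]=\sum_i\eps_i[\C\P^d]+[\widehat M]$ in the oriented equivariant cobordism group.

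It then remains to show that $[\widehat M]=0$. The only fixed points of $\widehat M$ come from the attached $\mathcal{W}_i$'s, namely the coordinate points $q_1,\ldots,q_d$ of each; one handles them either by iterating the same surgery procedure, or by exploiting the symmetry of $\C\P^d$ under permutation of its homogeneous coordinates to see that the signed contributions sum to zero. After all surgeries the remaining $S^1$-action is free, and a closed free $S^1$-manifold is equivariantly oriented null-cobordant by filling its principal bundle with the associated $D^2$-disk bundle. The hard part, I expect, is the careful tracking of orientations across the equivariant connected sums: one must verify that the sign assigned to each $\C\P^d$ summand is exactly the $\eps_i$ produced by the local comparison above, and that the residual fixed points introduced by the $\mathcal{W}_i$'s really do cancel without leaving additional $\C\P^d$ summands.
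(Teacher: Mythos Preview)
The paper does not supply its own proof of this theorem: it is quoted from Ginzburg--Guillemin--Karshon and the reader is referred to \cite{cobordism,ggk}. So there is no in-paper argument to compare yours against line by line; I can only comment on how your sketch lines up with the localisation philosophy that the paper invokes, and on whether your sketch actually goes through.

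Your overall strategy --- excise invariant disks around the fixed points, use that the complement carries a free $S^1$-action, and invoke the fact that closed free $S^1$-manifolds bound equivariantly --- is exactly the mechanism underlying the cited result, and your local analysis (weights $\pm 1$, sign $\eps_i$ from comparing the isotropy orientation with the ambient one) is correct and is precisely what the paper exploits later when computing signs for $M_r$.

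There is, however, a genuine gap in the middle of your argument. You propose to perform an equivariant connected sum of $M$ with $-\mathcal W_i=-\eps_i\,\C\P^d$, where $\C\P^d$ carries the action
\[
\lambda\cdot[z_0:z_1:\cdots:z_d]=[z_0:\lambda^{\delta_{i,1}}z_1:\cdots:\lambda^{\delta_{i,d}}z_d],\qquad \delta_{i,j}\in\{\pm1\},
\]
chosen to match the tangent representation at $p_i$. But this $\C\P^d$ is \emph{not} a quasi-free $S^1$-manifold with isolated fixed points. Its fixed locus is $\{q_0\}\sqcup \C\P^{k-1}\sqcup \C\P^{d-k-1}$, where $k=\#\{j:\delta_{i,j}=+1\}$; away from $q_0$ the stabilisers and fixed components are not of the type your inductive step requires. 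Consequently neither of your two proposed ways of disposing of the ``residual'' fixed points works: iterating the surgery does not terminate (each step introduces positive-dimensional fixed loci or higher-weight isotropy rather than removing fixed points), and the ``symmetry of $\C\P^d$ under permutation of homogeneous coordinates'' does not respect this circle action, so there is no cancellation to appeal to.

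A way to repair the argument, closer to what \cite{cobordism,ggk} actually do, is to avoid gluing in $\C\P^d$'s altogether. Remove the disks to obtain $M_0$ with free $S^1$-action and boundary $\coprod_i S_i^{2d-1}$; each boundary sphere is simultaneously $\partial D_i^{2d}$ (the excised disk, with its single fixed point) and $\partial L_i$, where $L_i=S_i^{2d-1}\times_{S^1}D^2$ is the associated disk bundle over $S_i^{2d-1}/S^1\cong\C\P^{d-1}$. The closed manifold $D_i^{2d}\cup_{S_i}(-L_i)$ is precisely $\pm\C\P^d$ with its standard $S^1$-action, and gluing the $L_i$'s to $M_0$ produces an $S^1$-manifold to which the free null-cobordism argument applies. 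This packages the local contribution as $\pm\C\P^d$ without ever producing a non-quasi-free intermediate object; the sign bookkeeping is then exactly the comparison of $A$ with $J$ that the paper carries out in Section~\ref{dim}.
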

The proof (see \cite{cobordism}) shows that each isolated fixed point contributes to the cobordism class of $M$ with a copy of the projective space $\C \P^d.$ The orientation of this projective space comes from the infinitesimal generator of the bending action, thus might not agree with the standard one.

Both the assumptions on the action are extremely strong. If we do not ask the $S^1$ action to be quasi-free (but still to have finitely many fixed points) then it is still possible to prove a result on equivariant orbifold cobordism between $M$ and the disjoint union of twisted projective spaces (\cite{cobordism}, \cite{ggk}). On the other hand, if we assume the action to be quasi-free but we allow the fixed point set not to be finite, still it is possible to describe explicitly the equivariant cobordism class of $M.$

\begin{theo} \label{theo.cob2}  \textnormal{(V.~Ginzburg, V.~Guillemin,
Y.~Karshon)}

 Let $M$ be an oriented $2d$-dimensional manifold endowed with a quasi-free $S^1$ action. Let $X_k, k=1, \ldots, N,$ be the connected components of the fixed point set $M^{S^1}.$ Then 
$$ M \sim \coprod_{k=1}^N B_k, $$ where $B_k$ is a fibration over $X_k $ with fiber $\C\P^{m_k},$ and $m_k= codim_{\C} X_k.$
\end{theo}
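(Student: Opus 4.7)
The plan is to reduce the statement to an equivariant normal-form problem near each fixed component $X_k$ and then construct a global cobordism by local cut-and-paste. First, by the equivariant tubular neighborhood theorem, a neighborhood of each $X_k$ in $M$ is $S^1$-equivariantly diffeomorphic to the total space of the normal bundle $N_k \to X_k$. Quasi-freeness forces every nonzero $S^1$-weight appearing on $N_k$ to equal $\pm 1$, so after a compatible choice of orientation $N_k$ becomes canonically a complex vector bundle of complex rank $m_k = \mathrm{codim}_{\C} X_k$, on which $S^1$ acts fiberwise by the standard weight-one representation.

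Next I would define $B_k := \P(N_k \oplus \underline{\C}) \to X_k$, the fiberwise projective completion of $N_k$, where $\underline{\C}$ is the trivial line bundle and $S^1$ acts with weight $+1$ on the $N_k$-summand and trivially on $\underline{\C}$. This is a smooth $\C\P^{m_k}$-bundle over $X_k$. Its $S^1$-fixed set consists of the zero section (identified with $X_k$) together with the infinity section $\P(N_k)$; the action is free on the complement. The essential identification used later is that $B_k \setminus \P(N_k)$ is $S^1$-equivariantly diffeomorphic to the total space of $N_k$, hence to a chosen tubular neighborhood of $X_k$ in $M$.

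To produce the equivariant cobordism, I would construct an oriented $(2d+1)$-dimensional $S^1$-manifold $W$ with $\partial W = M \sqcup \big(-\coprod_k B_k\big)$. Starting from $M \times [0,1]$, perform a local modification near each $X_k \times \{1\}$: excise a small equivariant disk bundle inside $N_k \times [\tfrac{1}{2},1]$ and glue in the corresponding slab $(B_k \setminus U_\infty) \times [\tfrac{1}{2},1]$, where $U_\infty$ is an equivariant tubular neighborhood of the infinity section $\P(N_k)$, using the canonical identification $N_k \cong B_k \setminus \P(N_k)$ along the common sphere-bundle boundary. After equivariant corner-rounding, $W$ becomes a smooth oriented $S^1$-manifold whose top boundary is $\coprod_k B_k$ and whose bottom boundary is $M$.

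The main obstacle is showing that this cut-and-paste truly produces a well-defined oriented equivariant cobordism class, independent of the auxiliary choices of invariant metric and tubular neighborhood embedding. Because $N_k$ is canonically complex-oriented by the weight-$+1$ $S^1$-action, the orientations on the glued pieces match along the sphere bundles, and the free $S^1$-action on the neck region is preserved by the replacement. Independence from the remaining choices follows from uniqueness up to equivariant isotopy of equivariant tubular neighborhoods, and any such isotopy itself yields an equivariant cobordism between the two resulting realizations. Together these checks yield $M \sim \coprod_k B_k$ as claimed, where one also notes that Theorem \ref{theo.cob} is recovered as the special case $\dim X_k = 0$, for then $B_k = \C\P^{m_k} = \C\P^d$.
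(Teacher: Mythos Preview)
The paper does not give its own proof of this statement: Theorem~\ref{theo.cob2} is quoted from Ginzburg--Guillemin--Karshon and used as a black box in the proof of Theorem~\ref{t1}. So there is no argument in the paper to compare your attempt against.

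Judged on its own, your sketch assembles the correct ingredients --- equivariant tubular neighbourhoods, the complex structure on $N_k$ coming from quasi-freeness, and the projective completion $B_k=\P(N_k\oplus\underline{\C})$ --- but the cobordism $W$ you describe does not have the claimed boundary. You excise a disk-bundle slab $D(N_k)\times[\tfrac12,1]$ from $M\times[0,1]$ near $X_k\times\{1\}$ and glue back $(B_k\setminus U_\infty)\times[\tfrac12,1]$. But $B_k\setminus U_\infty$ is a tubular neighbourhood of the \emph{zero} section of $B_k$, hence equivariantly diffeomorphic to the very $D(N_k)$ you removed; after the replacement the outgoing boundary of $W$ is still (equivariantly diffeomorphic to) $M$, not $\coprod_k B_k$. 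Nothing in your procedure removes the free piece $M\setminus\coprod_k U_k$ from the top, and nothing creates the infinity divisors $\P(N_k)$ there.

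What is missing is the genuine engine of the cobordism: a closed manifold with a \emph{free} $S^1$-action is an equivariant boundary (of the associated $D^2$-bundle over its quotient, with $S^1$ rotating the fibre). The arguments in \cite{cobordism,ggk} use this, either via equivariant surgery on $M\sqcup(-\coprod_k B_k)$ that cancels the matching copies of $X_k$ and then kills what remains, or via an abstract moment map and the Linearization Theorem, which first replaces $M$ by the disjoint union of the (polarized) normal bundles $N_k$ and then compactifies each $N_k$ to $B_k$. Your cut-and-paste, as written, bypasses this step and therefore does not produce the required $W$.
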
  

It is also possible to describe the equivariant orbifold cobordism class of $M$ when the $S^1$ action is not quasi-free and $M^{S^1}$ is not finite. In this more general case a result similar to Theorem \ref{theo.cob2}  holds, but the fibrations over the connected components of $M^{S^1}$ have now fibers which are twisted projective spaces.

\subsection{Proof of the Cobordism Theorem}\label{dim}
In light of the results presented in the previous sections we investigate the set of fixed points for a bending action.
Let  $\beta$ be the action of $S^1$ on $M_r$ by bending along the $(n-3)$-th diagonal $\mu_{(n-3)}= e_1+e_2+ \cdots + e_{n-2},$ i.e.
\begin{displaymath}
\begin{array}{crcl}
\!\! \beta:& \!S^1 \times M_r & \! \rightarrow & M_r\\
\!\! & \! (t, [(e_1, \ldots e_n)]) & \! \mapsto &\! \! \! [(\exp(t \mathrm{ad}_{\mu_{(n-3)}})e_1, \ldots, \exp(t \mathrm{ad}_{\mu_{(n-3)}})e_{n-2}, e_{n-1}, e_n)].
\end{array}
\end{displaymath}
The action $\beta$ is quasi-free, in fact the stabilizers of points are connected (they are $S^1$ for fixed points, $\{0\} $ otherwise). 

A point $P \in M_r$ is fixed by $\beta$ if it is of one of the following two types:
\begin{itemize}
\item[(I)] $[P]=[\vec{e}], e_1, \ldots, e_{n-2}$ are collinear as in Figure \ref{figura2}

\begin{figure}[htbp]
\begin{center}
\psfrag{1}{\footnotesize{$e_{1}$}}
\psfrag{2}{\footnotesize{$e_{2}$}}
\psfrag{k}{\footnotesize{$e_{n-2}$}}
\psfrag{k+1}{\footnotesize{$e_{n-1}$}}
\psfrag{++2}{\footnotesize{$e_{n}$}}
\includegraphics[width=7cm]{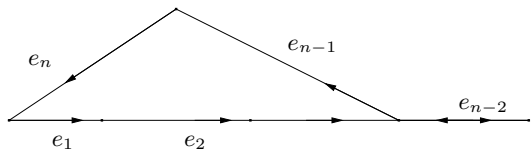}
\end{center}
\caption{Fixed point of type I}
\label{figura2}
\end{figure}

In this case the action $\beta$ fixes not just $[P]$ but also each representative.

\item[(II)] $[P]=[\vec{e}], e_{n-1}, e_{n}$ are collinear as in Figure \ref{figura3}.

\begin{figure}[htbp]
\begin{center}
\psfrag{1}{\footnotesize{$e_{1}$}}
\psfrag{2}{\footnotesize{$e_{2}$}}
\psfrag{n-1}{\footnotesize{$e_{n-1}$}}
\psfrag{n}{\footnotesize{$e_{n}$}}
\includegraphics[width=3cm]{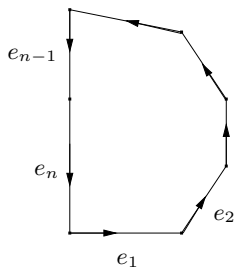}
\end{center}
\caption{Fixed point of type II}
\label{figura3}
\end{figure}

In this case the action $\beta$ changes the representative $\vec{e}$  but not the $SO(3)$ class.

\end{itemize} 
The fixed point set $ M_r^{S^1}$ is then the (disjoint) union of the sets $ (M_r^{S^1})_{isol}$ of fixed points of type I and $ (M_r^{S^1})_{subm}$ of fixed points of type II.

If $[P]$ is a fixed point of type II then $[P] \in X_k,$ where $X_k$ is a submanifold of fixed points. In particular $X_k$ is the space of polygons of $n-1$ sides $M_{\bar{r}},$ with $\bar{r}= (r_1, \ldots, r_{n-2}, \pm r_{n-1} \pm r_n) \in \R^{n-1}_+.$ (The signs $\pm$ are determined according to the orientation of the edges $e_{n-1}$ and $e_{n}$.) It follows that $codim_{\C} X_k=1,$ and so $X_k$ contributes to the cobordism of $M_r$ with the total space $B_k$ of a fibration on $X_k$ with fiber $\C\P^1.$ This implies that $B_k \sim 0$ because it is the boundary of the associated fibration $\tilde{B_k}$ on $X_k $ with fiber the disk $D$  $(\delta D = S^2 \sim \C\P^1).$

Fixed points of type I are instead isolated and so from Theorem \ref{theo.cob} contribute to the cobordism class of $M_r$ with a copy of $\C\P^{n-3}.$ The orientation of this projective space comes from the generator of the bending action and may not agree with the orientation that  $\C\P^{n-3}$ inherits from the symplectic structure of $M_r.$ In fact for each  $[P] \in (M_r^{S^1})_{isol}$ the symplectic form $\omega$ on $M_r$ defines a complex structure $J$ on $T_{[P]}M_r$ by $\omega_{[P]}(u,v)= g(u,Jv),$ where $g$ is a Riemannian metric on $\R^{3n}.$ The bending action defines too a complex structure on $T_{[P]}M_r:$ differentiating $\beta$ in $(\theta, [P])$ and valuating it at $1 \in \R \simeq Lie(S^1)$ we obtain an endomorphism of $T_{[p]}M_r$ and this defines also an $S^1$-action (the linear isotropy action)  on $T_{[p]}M_r:$
\begin{displaymath}
\begin{array}{crcl}
d_{[P]} \beta: & S^1 & \rightarrow & End(T_{[p]}M_r)\\
 & \theta & \mapsto & d_{(\theta, [P])} \beta (1)\\
\end{array}
\end{displaymath}
under which $T_{[p]}M_r$ decompose in the direct sum $$T_{[p]}M_r = \bigoplus_{w \in \Z}V_w$$ so that on each $V_w$ the $S^1$-action is ``multiplication by $e^{iw \theta}$''. The $w$'s are the isotropy weights and, because the action is semi-free (for $S^1$-actions quasi-free and semi free are equivalent), they are $0$ or $\pm1.$ The differential of $d_{[P]} \beta$
$$A= \frac{d}{d \theta} \big( d_{[P]} \beta  \big)_{|=0}(1) : T_{[p]}M_r \rightarrow T_{[p]}M_r$$ is the generator of the bending action (note that on each $V_w,$ $A$ is the multiplication by $ i w$).

To determine the cobordism class of $M_r$ we will calculate the orientation that $A$ induces on the projective spaces $\C\P^{n-3}.$  The proof will go as follows: first we will calculate $$\hat{A}= \frac{d}{d \theta} \big( d_{\vec{e}} \hat{\beta}  \big)_{|=0}(1) : T_{\vec{e}}\tilde{M}_r \rightarrow T_{\vec{e}} \tilde{M}_r$$ where $\hat{\beta} $ is the bending action on the level set $$ \tilde{M_r} = \{ \vec{e} \in \prod_{j=1}^{n} S^2 (r_j) / e_{1} + \ldots + e_{n} =0\},$$ i.e.
\begin{displaymath}
\begin{array}{crcl}
\hat{\beta}:& S^1 \times \tilde{M}_r & \rightarrow & \tilde{M}_r\\
 & (t, (e_1, \ldots e_n)) & \mapsto & (\exp(t\,\mathrm{ad}_{\mu_{(n-3)}})e_1, \ldots, \exp(t\,\mathrm{ad}_{\mu_{(n-3)}})e_{n-2}, e_{n-1}, e_n).
\end{array}
\end{displaymath}
Then identifying $T_{[P]} M_r$ with the orthogonal $T^{\perp}_{P} (SO(3)\cdot P)$ of tangent space to the $SO(3)$ orbit through $P$ in $\tilde{M_r}$ we will project $\hat{A}$ on $T_{[P]}M_r$ and write $A$ explicitly. Finally we will verify that $A$ is a complex structure and compare it with $J$ by checking when a $J$-positive basis of $T_P M_r $ is also $A$-positive.

\begin{rmk}
Observe that $\hat{A}$ is well defined because if $P=[\vec{e}]$ is a fixed point of type I then $\vec{e}$ is a fixed point for $\hat{\beta}$ (i.e. $\beta $ fixes each representative of the class, not just the class).
\end{rmk}
\subsubsection{The complex structure $A$}
{\bf Determining $ \hat{A} : T_P \tilde{M_r} \to T_P \tilde{M_r} $}

The action $\hat{\beta} $ described above still bends the first $(n-2)$ sides of a polygon along its $(n-3)$-diagonal.
An element of $T_P \tilde{M_r}$ is of the form $\frac{d}{d \epsilon} (P+ \epsilon Q)\vert_{\epsilon =0}$,
$P+ \epsilon Q = (e_1 +\epsilon v_1, \ldots, e_n+ \epsilon v_n).$ 
Let $\mu$ be the $(n-3)$-diagonal of the polygon $P$, i.e. $\mu = e_1+ \ldots + e_{n-2}$, and let $\nu$ be the $(n-3)$-diagonal of $P+ \epsilon Q$, i.e. $$\nu = \sum_{i=1}^{n-2} e_i + \epsilon \sum_{i=1}^{n-2} v_i := \mu + \epsilon \xi.$$ 
From now on, when $\vec{v}$ is understood, we will write $\xi$ for $\xi(\vec{v})= \sum_{i=1}^{n-2} v_i.$

Let $R_\epsilon $ be the rotation that takes $ \nu$ to the $x$-axis and let  $b_{\theta}$ be the rotation of angle $\theta$ around the 
$x$-axis. The bending action $\hat{\beta}$ can be described in terms of $R_{\eps}$ and $b_{\theta}$, precisely: $$\hat{\beta}(P+ \epsilon Q)=  (\ldots, R_{\epsilon}^{-1} b_{\theta} 
R_{\epsilon} (e_{j}+ \epsilon v_j), \ldots, e_{n-1} + \epsilon v_{n-1}, 
 e_{n} + \epsilon v_{n}). $$
So $$
\begin{array}{llll}
\hat{A} :& T_P \tilde{M_r} & \to & T_P \tilde{M_r} \\
 & v & \mapsto & \hat{A}(v)\\
\end{array} $$
with $$ \hat{A}(v) = \frac{d}{d \theta}_{\vert \theta =0}  
\frac{d}{d \epsilon}_{\vert \epsilon =0} (\ldots, R_{\epsilon}^{-1} b_{\theta} 
R_{\epsilon} (e_{j}+ \epsilon v_j), \ldots, e_{n-1} + \epsilon v_{n-1}, 
 e_{n} + \epsilon v_{n}). $$

\begin{rmk}\label{boh}
We will use the notation $\j \wedge \k $ for the matrix 
$\left(\begin{array}{ccc}
0&0&0\\
0&0&-1\\
0&1&0\\
\end{array}\right)$ of the rotation around the $x$-axis. In general, for $u_1, u_2 $ 
in $ \R^3, $ $u_1 \wedge u_2$ is the rotation which takes $u_1$ on $u_2,$ i.e.$$(u_1 \wedge u_2)(v) =   \langle u_1, v \rangle u_2 - \langle u_2, v\rangle u_1 \quad \forall v \in \R^3.$$
\end{rmk}

\begin{prop}
$$\frac{d}{d \theta}_{\vert \theta =0}  
\Big(\frac{d}{d \epsilon}_{\vert \epsilon =0} R_{\epsilon}^{-1} b_{\theta} R_{\epsilon} 
(e_{j}+ \epsilon v_j)\Big) = - \frac{ \langle \mu, e_j \rangle}{\| \mu \|^2} \underline{j} 
\wedge \underline{k} (\xi) + \underline{j} \wedge \underline{k} (v_j). $$
\end{prop}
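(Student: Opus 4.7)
The key observation is that $R_\epsilon^{-1} b_\theta R_\epsilon$ is, by construction, the rotation by angle $\theta$ around the axis $\nu(\epsilon)=\mu+\epsilon\xi$, and therefore depends only on $\nu$, not on the particular choice of $R_\epsilon$. This removes the mild ambiguity in $R_\epsilon$ (which is defined only up to rotations fixing the $x$-axis) and reduces the statement to differentiating a concrete smooth function of the two parameters.

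By smoothness the two partial derivatives commute, so I would compute them in the opposite order. The rotation by angle $\theta$ around the unit vector $n(\epsilon) := \nu(\epsilon)/\|\nu(\epsilon)\|$ has $\theta$-derivative at $\theta = 0$ equal to the operator $v \mapsto n(\epsilon)\times v$, and applied to $e_j + \epsilon v_j$ this gives $n(\epsilon)\times(e_j+\epsilon v_j)$. A further differentiation in $\epsilon$ at $\epsilon = 0$ produces
$$ n'(0)\times e_j + n(0)\times v_j. $$

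At a fixed point of type I the edges $e_1,\ldots,e_{n-2}$ are collinear, so after choosing coordinates with $\mu = \|\mu\|\,\underline{i}$ one has $n(0) = \underline{i}$; a direct check on the three basis vectors shows that the operator $v \mapsto \underline{i}\times v$ coincides with the matrix $J := \underline{j}\wedge\underline{k}$ of Remark \ref{boh}. This identifies the second term above as $J(v_j)$. For the first term, a brief Taylor expansion of $\nu/\|\nu\|$ at $\epsilon = 0$ yields
$$ n'(0) = \frac{\xi}{\|\mu\|} - \frac{\langle\mu,\xi\rangle}{\|\mu\|^3}\,\mu. $$
The second summand is parallel to $\mu$, and hence to $e_j$ by collinearity, so it vanishes after crossing with $e_j$; only $\frac{1}{\|\mu\|}\,\xi\times e_j$ survives. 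Writing $e_j = \frac{\langle\mu,e_j\rangle}{\|\mu\|^2}\,\mu$ (again by collinearity) and using $(\mu/\|\mu\|)\times\xi = J(\xi)$ rewrites this as $-\frac{\langle\mu,e_j\rangle}{\|\mu\|^2}\,J(\xi)$, producing the claimed formula.

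There is no real obstacle; the argument reduces to a direct calculation. The two substantive points are the opening observation that the conjugation depends only on $\nu$ (which bypasses any explicit formula for $R_\epsilon$ and the component of $dR_\epsilon/d\epsilon$ along the $x$-axis that is genuinely indeterminate), and the recognition that the type-I collinearity of the $e_j$ with $\mu$ is exactly what collapses the Euclidean cross products into the single canonical matrix $J = \underline{j}\wedge\underline{k}$.
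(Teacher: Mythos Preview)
Your argument is correct and slightly cleaner than the paper's. The paper proceeds by fixing the specific choice $R_\epsilon=\exp\bigl(-\frac{\epsilon}{\|\mu\|^2}\,\mu\wedge\xi\bigr)$, Taylor-expanding $R_\epsilon^{\pm1}$ and $b_\theta$ separately, and differentiating in the order written (first $\epsilon$, then $\theta$); the resulting commutator-type expression $\frac{\mu\wedge\xi}{\|\mu\|^2}(\j\wedge\k)e_j-(\j\wedge\k)\frac{\mu\wedge\xi}{\|\mu\|^2}e_j+(\j\wedge\k)v_j$ is then simplified using the orthogonality relations $\langle\j,e_j\rangle=\langle\k,e_j\rangle=\langle\xi,e_j\rangle=0$ that come from collinearity. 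Your route instead identifies the conjugate $R_\epsilon^{-1}b_\theta R_\epsilon$ \emph{a priori} as rotation about $n(\epsilon)=\nu/\|\nu\|$, which lets you bypass any explicit formula for $R_\epsilon$ and reduces everything to the single expression $n(\epsilon)\times(e_j+\epsilon v_j)$; swapping the order of differentiation then makes the computation a two-line Leibniz rule. What you gain is conceptual transparency and an automatic handling of the ambiguity in $R_\epsilon$; what the paper's approach buys is that it stays closer to the $\mu\wedge\xi$ notation used elsewhere in Section~\ref{dim}, so no translation between ``$\times$'' and ``$\wedge$'' is needed downstream.
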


\begin{proof}
Using the same notation as in Remark \ref{boh}, the rotation  $R_{\epsilon}$ is $\exp (- \Theta \frac{\mu \wedge \epsilon \xi}{\lVert \mu \rVert \lVert \epsilon \xi \rVert}),$ 
where the angle of  rotation is 
$\Theta = \frac{\lVert \epsilon \xi \rVert}{\lVert \mu \rVert},$ and $b_{\theta}$ is $ \exp (\theta  \underline{j} \wedge \underline{k}).$ The first order Taylor expansions of $R_{\epsilon}^{-1}$ and $ R_{\epsilon}$ are $$R_{\epsilon}^{-1} 
= id + \epsilon \frac{\mu \wedge \xi}{\lVert \mu \rVert^2} + o (\epsilon), \quad \quad R_{\epsilon}
= id - \epsilon \frac{\mu \wedge \xi}{\lVert \mu \rVert^2} + o (\epsilon)$$


So $$  \frac{d}{d \epsilon}_{\vert \epsilon =0} R_{\epsilon}^{-1} b_{\theta} 
R_{\epsilon} (e_{j}+ \epsilon v_j)= \Big[\frac{\mu \wedge \xi}{\| \mu \|^2} b_{\theta}(e_j+ \epsilon v_j)-  b_{\theta}\frac{\mu \wedge \xi}{\| \mu \|^2} (e_j+ \epsilon v_j) +b_{\theta} v_j \Big]_{|\epsilon=0}=$$
$$  \frac{\mu \wedge \xi}{\| \mu \|^2} b_{\theta} e_j - b_{\theta} \frac{\mu \wedge \xi}{\| \mu \|^2} e_j + b_{\theta} v_j.$$
Similarly observe that the first order Taylor expansion of $b_{\theta}$ is
$$ b_{\theta} = id + \theta \underline{j} \wedge \underline{k} + o(\theta),$$ and so
$$\frac{d}{d \theta}_{\vert \theta =0} \frac{d}{d \epsilon}_{\vert \epsilon =0}  R_{\epsilon}^{-1} b_{\theta} R_{\epsilon} (e_{j}+ \epsilon v_j)=\frac{d}{d \theta}_{\vert \theta =0}\Big( \frac{\mu \wedge \xi}{\| \mu \|^2} b_{\theta} e_j - b_{\theta} \frac{\mu \wedge \xi}{\| \mu \|^2} e_j + b_{\theta} v_j\Big)=$$

$$  \frac{\mu \wedge \xi}{\| \mu \|^2}  \underline{j} \wedge \underline{k}( e_j)
- \underline{j} \wedge \underline{k} \frac{\mu \wedge \xi}{\| \mu \|^2} ( e_j) +\underline{j} \wedge \underline{k}( v_j)=$$

$$  \frac{\mu \wedge \xi}{\| \mu \|^2}( \underbrace{ \langle \underline{j}, e_j \rangle}_{=0}  \underline{k} -  \underbrace{ \langle \underline{k}, e_j\rangle}_{=0}  \underline{j})-
\frac{\underline{j} \wedge \underline{k}}{\| \mu \|^2}( \langle \mu, e_j \rangle \xi - 
\underbrace{ \langle \xi, e_j\rangle}_{=0} \mu) + \j \wedge \k (v_j)= $$

$$- \frac{ \langle \mu, e_j\rangle}{\| \mu \|^2} \j \wedge \k (\xi) + \j \wedge \k (v_j). $$
\end{proof}

Hence the map $\hat{A}$ is given by
$$
\begin{array}{llll}
\hat{A} :& T_P \tilde{M_r} & \to & T_P \tilde{M_r} \\
 & v & \mapsto & (\hat{A}_1(v), \ldots, \hat{A}_k (v), 0,0)= \hat{A}(v),\\
\end{array} $$
where \begin{equation}\label{Acap} \hat{A}_j (v)= - \frac{ \langle \mu, e_j \rangle}{\| \mu \|^2} \underline{j} \wedge \underline{k} (\xi) + \underline{j} \wedge \underline{k} (v_j).\end{equation}

{\bf Passage to the quotient  $M_r = \tilde{M_r} \Big/ SO(3)$}\\
Under the $SO(3)$-action the tangent space in $P$ at $\tilde{M}_r$ decomposes in the direct sum of the tangent space at the $SO(3)$ orbit trough $P$ and its orthogonal: 
$$T_{ P}\tilde{M}_r= T_P (SO(3) \cdot P) \oplus T_P^{\perp} (SO(3) \cdot P).  $$\label{identif}
Identifying  $ T_P^{\perp} (SO(3) \cdot P)$ with $ T_{[ P]}M_r$ we calculate $A$ by  projecting $\hat{A}$ on $T_P^{\perp} (SO(3) \cdot P),$ i.e., if $\delta^1, \delta^2, \delta^3$ is an orthogonal basis of $T_p (SO(3) \cdot P),$

\begin{equation} \label{proiez} A(v) = \hat{A}(v) - \frac{\langle \hat{A}(v), \delta^1 \rangle}{\| \delta^1 \|^2 } \delta^1 -  \frac{\langle \hat{A}(v), \delta^2 \rangle}{\| \delta^2 \|^2 } \delta^2 -
 \frac{\langle \hat{A}(v), \delta^3 \rangle}{\| \delta^3 \|^2 } \delta^3.\end{equation}

The generators of the $SO(3)$-action are the rotations around the axes.
So $\hat{\delta}^1 =(e_1 \wedge \i, \ldots, e_n \wedge \i),
\hat{\delta}^2 =(e_1 \wedge \j, \ldots, e_n \wedge \j),
\hat{\delta}^3 =(e_1 \wedge \k, \ldots, e_n \wedge \k)$
define a basis of $T_p (SO(3) \cdot P).$ This basis in general is not orthogonal with respect to the metric associated to the symplectic structure and we will orthonormalize it using the Gram-Schmidt formula. So, in order to write explicitly the basis  $\hat{\delta}^1, \hat{\delta}^2 $ and $\hat{\delta}^3$ of the $SO(3)$-orbit trough $P$ in $\tilde{M_r}$ let us fix a representative $\vec{e}$ in $[P].$

Because $P$ is planar it is not restrictive to assume that it lies in the plane $(x,y).$ Moreover, let us assume that the coordinate axis $x$ is oriented as the $(n-3)$-th diagonal $\mu_{(n-3)} := \mu,$ then the triangle in Figure \ref{figura2} has side lengths $r_n, r_{n-1}, $ and $\sum \epsilon_i r_i, $ where $\epsilon_1=1 $ if $e_i= \frac{r_i}{\| \mu \|} \mu$ and $\epsilon_i = - 1$ otherwise. This gives a geometric interpretation of the notion of $r$-admissibility for an index set $I$ introduced in Definition \eqref{df}. In fact $I$ counts the number of ``forward tracks'', or, more formally, if $\ell= |I|,$ then $$\ell= \sharp \{ e_j / e_j \cdot \mu > 0 \} $$
and the inequalities in system \eqref{tr} are just the ``triangle inequalities'' for the triangle of edge lengths  $r_n, r_{n-1}, $ and $\sum \epsilon_i r_i $. So $I$ is $r$-admissible if and only if such a triangle (as in Figure \ref{figura2}) closes.
The assumptions done so far are not restrictive.
Let us also assume that the first $\ell$ edges are oriented as the $x$-axis, i.e.
\begin{equation}\label{ipotesi1} e_i = (r_i, 0,0),\quad \forall i=1, \ldots, \ell,\end{equation}
and that the following $(n-2- \ell)$ edges are conversely oriented, i.e. 
\begin{equation}\label{ipotesi2} e_i =( -r_i, 0,0) \quad \forall i= \ell+1, \ldots, n-2.\end{equation}
This assumption is instead restrictive, we are in fact choosing the polygon $P$ corresponding to the index set $I=\{1, \ldots, \ell \}.$ This assumption is useful in order to keep the notation more compact. In Remark \ref{l} we will say some more words about what happens if we consider another class.

\begin{figure}[htbp]
\begin{center}
\psfrag{1}{\footnotesize{$e_{1}$}}
\psfrag{2}{\footnotesize{$e_{2}$}}
\psfrag{l}{\footnotesize{$e_{\ell}$}}
\psfrag{l+1}{\footnotesize{$e_{\ell+1}$}}
\psfrag{k}{\footnotesize{$e_{n-2}$}}
\psfrag{k+1}{\footnotesize{$e_{n-1}$}}
\psfrag{++2}{\footnotesize{$e_{n}$}}
\psfrag{a}{\footnotesize{$\alpha$}}
\psfrag{b}{\footnotesize{$\theta$}}
\includegraphics[width=7cm]{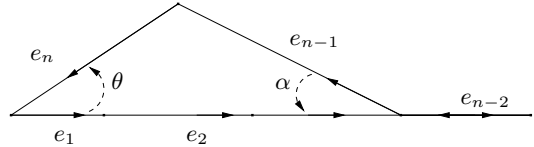}
\end{center}
\caption{ Model for $[P]$ fixed point of type I}
\label{figura1}
\end{figure}

Under these assumptions the polygon $P$ is as in Figure \ref{figura1} the last two edges $e_n$ and $e_{n-1}$ are
$$e_{n}= (- r_{n} \cos \theta,- r_{n} \sin \theta,0  ), \quad 
e_{n-1}= (- r_{n-1} \cos \alpha, r_{n-1} \sin \alpha,0  ).$$
We can express $\cos \alpha$ and $\sin \alpha$ as functions of $\theta, r_{n-1}$ and $r_{n}$ as follows:
$$\sin \alpha = \frac{r_{n}}{r_{n-1}} \sin \theta,\quad \cos \alpha = \frac{\| \mu \| - r_{n} \cos \theta}{r_{n-1} }.$$
With these assumptions the vectors  $\hat{\delta}^1,$  $\hat{\delta}^2$ and  $\hat{\delta}^3$ are 
\begin{center} 
$\hat{\delta}^1 = (0, \ldots, 0 , - r_{n-1} \sin \alpha \, \k,  r_{n} \sin \theta \, \k );$\end{center}
\begin{center}$ \hat{\delta}^2 = (r_1 \, \k, \ldots, r_{\ell} \, \k, - r_{\ell+1} \, \k , \ldots, - r_{n-2} \, \k , - r_{n-1} \cos \alpha \, \k,  -r_{n} \cos \theta \, \k );$\end{center}
$\hat{\delta}^3= (-r_1 \, \k, \ldots, -r_{\ell} \, \k,  r_{\ell+1} \, \k , \ldots,  r_{n-2} \, \k , 
r_{n-1} \sin \alpha \, \i + r_{n-1} \cos \alpha \, \j,$
\begin{flushright} $ -r_{n} \sin \theta \, \i + r_{n} \cos \theta \, \j).$ \end{flushright}

Applying Gram-Schmidt we build an orthogonal basis $\{ \delta^1, \delta^2, \delta^3 \}$ from the basis $\{ \hat{\delta}^1, \hat{\delta}^2, \hat{\delta}^3\},$ i.e. $\delta^1 := \hat{\delta^1},$ 
$\delta^2 := \hat{\delta^2}- \frac{\langle \hat{\delta}^2,\delta^1\rangle}{\langle \delta^1,\delta^1 \rangle }\delta^1$ and 
$\delta^3 := \hat{\delta^3}- \frac{\langle \hat{\delta}^3,\delta^1 \rangle }{\langle \hat{\delta^1},\hat{\delta^1} \rangle } \hat{\delta^1} - \frac{\langle \hat{\delta}^3,\delta^2 \rangle }{\langle \delta^2,\delta^2 \rangle } \delta^2.$

Recall that the scalar product on $T_P \tilde{M}_r$ is $\langle u,v \rangle = \sum_{i=1}^{n} \frac{1}{r_i} \langle u_i, v_i \rangle_S$ where $\langle \cdot, \cdot \rangle_S $ is the standard scalar product in $\R^3.$ So 
$$ \langle \hat{\delta^2},\hat{\delta^1} \rangle = \frac{r_{n}}{r_{n-1}} \sin^2 \theta (r_{n-1} +r_{n} ) \quad \textnormal{and} \quad \langle \hat{\delta^1},\hat{\delta^1}\rangle =  \frac{r_{n}}{r_{n-1}} \sin^2 \theta (r_{n-1} +r_{n} ).$$
Moreover  $\langle \hat{\delta}^3,\delta^1\rangle =0 $ and $\langle \hat{\delta}^3,\delta^2 \rangle =0.$

 To summarize, an orthogonal basis of $ T_p (SO(3) \cdot P)$ is given by:
$$\delta^1 = (0, \ldots, 0 , - r_{n} \sin \theta \k,  r_{n} \sin \theta \k ),$$
$$ \delta^2 = (r_1 \k, \ldots, r_{\ell} \k, - r_{\ell+1} \k , \ldots, - r_{n-2} \k , - \frac{r_{n-1} \| \mu \|}{r_{n-1} + r_{n}} \k,  - \frac{r_{n} \| \mu \|}{r_{n-1} + r_{n}} \k ),$$
$ \delta^3 = (-r_1 \, \k, \ldots, -r_{\ell} \, \k,  r_{\ell+1} \, \k , \ldots,  r_{n-2} \, \k ,r_{n} \sin \theta \, \i +( \| \mu \| - r_{n} \cos \theta) \, \j,$
\begin{flushright}$  -r_{n} \sin \theta \, \i + r_{n} \cos \theta \, \j).$ \end{flushright}

{\bf Computing $A(v)$}\\

Recall that $ A(v) = \hat{A}(v) - \frac{ \langle \hat{A}(v), \delta^1\rangle}{\| \delta^1 \|^2 } \delta^1 -  \frac{ \langle \hat{A}(v), \delta^2\rangle }{\| \delta^2 \|^2 } \delta^2 -
 \frac{\langle \hat{A}(v), \delta^3 \rangle }{\| \delta^3 \|^2 } \delta^3, $
where from \eqref{Acap} the $j$-th component of $\hat{A}(v)$ is $$\hat{A}(v)_j=  \Big( - \frac{\langle \mu, e_j \rangle }{\| \mu \|^2} \langle \underline{j}, \xi \rangle +  \langle \underline{j}, v_j \rangle \Big) \underline{k} +\Big(  \frac{\langle \mu, e_j \rangle }{\| \mu \|^2} \langle \underline{k}, \xi \rangle -  \langle \underline{k}, v_j \rangle \Big) \underline{j}$$ for $j=1, \ldots, n-2,$
and $\hat{A}(v)_j=0$ if $j=n-1,n.$

Let $\epsilon_j$ denote the direction of $e_j,$ i.e.
\begin{displaymath}
\epsilon_j= \left\{ \begin{array}{ll} 
1,& j=1, \ldots, \ell\\
-1& j= \ell+1, \ldots,n-2.\\
\end{array} \right.
\end{displaymath} 
It is straightforward to verify that $\langle \hat{A}(v), \delta^1\rangle = 0,$
$$\langle \hat{A}(v), \delta^2\rangle = \sum_{j=1}^{n-2} \epsilon_j \Big( - \frac{\langle \mu, e_j \rangle }{\| \mu \|^2} \langle \j,\xi \rangle  + \langle \j,v_j \rangle \Big) $$
and
$$\langle \hat{A}(v), \delta^3 \rangle = \sum_{j=1}^{n-2} (- \epsilon_j) \Big( - \frac{\langle \mu, e_j \rangle }{\| \mu \|^2} \langle \k,\xi \rangle + \langle \k,v_j \rangle  \Big).$$

Hence, for each $v \in T_{[P]} M_r$ the components of $A(v)$ are:

$$A(v)_j =  \Big( - \frac{r_j}{\| \mu \|} \langle \j,\xi \rangle + \langle \j,v_j\rangle -  \frac{\langle \hat{A}(v), \delta^2 \rangle}{\| \delta^2 \|^2 } r_j\Big) \k + \Big( \frac{r_j}{\| \mu \| } \langle \k, \xi \rangle - \langle \k, v_j \rangle + \frac{\langle \hat{A}(v), \delta^3 \rangle}{\| \delta^3 \|^2 } r_j \Big) \j,$$ for all $j=1, \ldots, \ell ;$
$$A(v)_j= \Big(  \frac{r_j}{\| \mu \|} \langle \j,\xi \rangle + \langle \j,v_j \rangle +  \frac{\langle \hat{A}(v), \delta^2 \rangle}{\| \delta^2 \|^2 } r_j \Big) \k + \Big(  - \frac{r_j}{\| \mu \| } \langle \k, \xi \rangle - \langle \k, v_j \rangle - \frac{\langle \hat{A}(v), \delta^3 \rangle}{\| \delta^3 \|^2 } r_j \Big)\j,$$
for all $j=\ell+1, \ldots, n-2; $
\begin{equation}\label{lis1}\!\!A(v)_{_{n-1}} \!\!= \!  \frac{\langle \hat{A}(v), \delta^2 \rangle}{\| \delta^2 \|^2 } \frac{r_{n-1} \| \mu \| }{r_{n-1} + r_{n} } \k -  \frac{\langle \hat{A}(v), \delta^3 \rangle}{\| \delta^3 \|^2}
(r_{n} \sin \theta \i + (\| \mu \| - r_{n} \cos \theta))\j ;\end{equation}

\begin{equation}\label{lis2}A(v)_{_{n}}=  \frac{\langle \hat{A}(v), \delta^2\rangle}{\| \delta^2 \|^2 } \frac{r_{n} \| \mu \| }{r_{n-1} + r_{n} } \k -  \frac{\langle \hat{A}(v), \delta^3 \rangle}{\| \delta^3 \|^2} 
(-r_{n} \sin \theta \i +  r_{n} \cos \theta)\j .\end{equation}

\subsubsection{Comparing the complex structures $A$ and $J$}

{\bf Determining a basis for $T_{[P]} M_r$}\\
Using the identification $ T_{[P]} M_r \simeq T_P^{\perp} (SO(3) \cdot P), $ Kapovich and Millson (\cite{km}) write the equations of $T_{[P]} M_r$ as a subspace of $\R^{3n}.$ Precisely, $v \in T_{[P]} M_r$ if and only if:
\begin{itemize}
\item[i)] $\sum_{i=1}^{n} v_i = 0,$
\item[ii)] $ e_i \cdot v_i =0 \quad \forall i=1, \ldots , n,$
\item[iii)] $\sum_{i=1}^{n} \frac{1}{r_i} (e_i \wedge v_i)= 0.$
\end{itemize}

The vectors 
\begin{displaymath}
\begin{array}{cl}
u_i = (0, \ldots, 0,\underbrace{ \underline{j}}_{i},\underbrace{- \underline{j}}_{i+1}, 0, \ldots, 0),&\quad i=1, \ldots, \ell-1,\\
\hat{u}_i = (0, \ldots, 0,\underbrace{ \underline{j}}_{i},\underbrace{- \underline{j}}_{i+1}, 0, \ldots, 0), &\quad i=\ell+1, \ldots,n-3, \\
v_i = (0, \ldots, 0,\underbrace{ \underline{k}}_{i},\underbrace{- \underline{k}}_{i+1}, 0, \ldots, 0), &\quad i=1, \ldots, \ell-1,\\
\hat{v}_i = (0, \ldots, 0,\underbrace{ \underline{k}}_{i},\underbrace{- \underline{k}}_{i+1}, 0, \ldots, 0), &\quad i=\ell, \ldots,n-3 \\
\end{array}
\end{displaymath}
verify the conditions $ i),$ $ ii),$ $ iii),$ so they are in $ T_{[P]}M_r,$ and are linearly independent.

\begin{rmk}
Note that a vector of the form $ (0, \ldots, 0,\underbrace{ \underline{j}}_{\ell},\underbrace{- \underline{j}}_{\ell+1}, 0, \ldots, 0)$ would not satisfy condition iii).
\end{rmk}

When $\ell=n-2$ we have $2(n-3)$such vectors and they are linearly independent,  forming a basis of  $T_{[P]} M_r.$\\
If instead $\ell\ne n-2 $ then we have $2 (n-4)$ vectors and it is necessary to complete them to a basis. To do this we look for a vector of the form 
$$ w = ( \lambda \k, \ldots, \lambda \k, \gamma \k, \ldots, \gamma \k, \lambda_{n-1} \k, \lambda_{n} \k),$$
with $\lambda, \gamma,  \lambda_{n-1},  \lambda_{n} \in \R,$ and we impose that $w$ satisfies conditions i), ii) and  iii).
Condition $iii)$ is straightforward verified by $w.$
Condition $i)$ holds if and only if 
\begin{equation} \label{cond1}
\ell \la + (n-\ell-2) \gamma+ \la_{n-1}+ \la_{n} =0.
\end{equation}

Denoting by $w_i$ the $i$-th component of $w$
$$\frac{e_i}{r_i} \wedge w_i = (1,0,0) \wedge (0,0,\la) = - \la \j \quad \quad \forall i=1, \ldots, \ell,$$ 
$$\frac{e_i}{r_i} \wedge w_i = (-1,0,0) \wedge (0,0,\gamma) = \gamma \j \quad \quad \forall i=\ell+1, \ldots, n-2,$$

$$\frac{1}{r_{n-1}}e_{n-1} \wedge w_{n-1} =
 \Big( - \frac{\| \mu \| - r_{n} \cos \theta}{r_{n-1}},\frac{r_{n}}{r_{n-1}} \sin \theta,0 \Big) \wedge (0,0, \la_{n-1})$$
 $$ = \Big(\la_{n-1} \frac{r_{n}}{r_{n-1}} \sin \theta , \la_{n-1}  \frac{\| \mu \| - r_{n} \cos \theta}{r_{n-1}},0 \Big),$$
$$\frac{1}{r_{n}} e_{n} \wedge w_{n} =( -  \cos \theta, -  \sin \theta, 0 ) \wedge (0,0, \la_{n})=
 (- \la_{n}  \sin \theta , \la_{n} \cos \theta,0).$$
Consequently we obtain that condition iii) holds if and only if 
\begin{equation} \label{cond2}
 - \ell \lambda + (n-\ell-2) \gamma +\la_{n-1} \frac{\| \mu \| - r_{n} \cos \theta}{r_{n-1}} + \la_{n} \cos \theta = 0 
\end{equation}
and
\begin{equation} \label{cond3} \la_{n-1} \frac{r_{n}}{r_{n-1}} sen \theta - \la_{n} sin \theta =0. \end{equation}

So $w$ is determined by the system of equations (\ref{cond1}), (\ref{cond2}), (\ref{cond3}).
A solution of this system is 
$$  \la= - \frac{1}{2\ell} (\| \mu \| - r_{n-1}- r_{n}), \quad \gamma = \frac{1}{2(n- \ell-2)} (\| \mu \| + r_{n-1}+ r_{n})$$ $$ \la_{n-1}= - r_{n-1}, \quad  \la_{n} = - r_{n}.$$

From now on let us fix these values for $\la, \gamma, \la_{n-1}, \la_{n}.$
The vector $  w  $ is linearly independent with the vectors $u_i, \hat{u}_i, v_i, \hat{v}_i.$ $J$ is the complex structure associated to the symplectic form, so $-J(w)$ is linearly independent with  $u_i, \hat{u}_i, v_i, \hat{v}_i, w$ and complete to a basis of $T_{[P]}M_r.$
Recalling that $J(w) = (\frac{e_1}{r_1} \wedge w_1, \ldots,\frac{e_{n}}{r_{n}} \wedge w_{n} )$ we get
$$ -J(w) = (\lambda \j, \dots, \lambda \j, - \gamma \j, - \gamma \j, r_{n} \sin \theta \i + (\| \mu \| - r_{n} \cos \theta) \j , -r_{n} \sin \theta \i + 
 r_{n} \cos \theta \j).$$
So $\mathcal{B}_1 = \{ u_1, v_1, \ldots,u_{\ell-1}, v_{\ell-1}, \hat{u}_{\ell+1},- \hat{v}_{\ell+1}, \ldots,  \hat{u}_{n-3}, - \hat{v}_{n-3}, J(w) , w\}$ is a basis of  $T_{[P]}M_r$ and it is positive, i.e. this is the standard convention. In fact, 
$$J (u_i) = ( \ldots, \frac{r_i}{r_i} \i \wedge \j, \frac{r_i}{r_i} \i \wedge (- \j),0, \ldots, 0) = (0, \ldots, 0, \k, -\k, 0, \ldots, 0)= v_i,$$
$$J (\hat{u}_i) = ( \ldots,- \frac{r_i}{r_i} \i \wedge \j,- \frac{r_i}{r_i} \i \wedge (- \j),0, \ldots, 0) = (0, \ldots, 0,- \k, \k, 0, \ldots, 0)= - \hat{v}_i,$$
and  $J (v_i) = -u_i,$  $J (- \hat{v}_i) = - \hat{u}_i$ and  $J (-J(w)) =w.$

{\bf $A $ is a complex structure}\\
In this section we will verify that $A$ is a complex structure. To check that $A^2 = - Id$ we write the matrix of $A$ with respect to the basis $\mathcal{B}_1$ ( with a little abuse of notation, we will call this matrix $A$).

First of all we can note that $\xi(u_i)= \xi(\hat{u}_i)= \xi(v_i)= \xi(\hat{v}_i)=0$ (remember that $\xi(v)= \sum_{i=1}^{n-2} v_i$ for all $v \in \R^{3n}$). So $$\langle \hat{A}(u_i), \delta^2 \rangle = \langle \j, \j \rangle + \langle \j , -\j \rangle =0$$ and similarly $\langle \hat{A}(\hat{u}_i), \delta^2 \rangle = \langle \hat{A}(v_i), \delta^3 \rangle = \langle \hat{A}(\hat{v}_i), \delta^3 \rangle = 0.$ Moreover it is trivial to see that $\langle \hat{A}(u_i), \delta^3 \rangle= \langle \hat{A}(\hat{u}_i), \delta^3 \rangle = \langle \hat{A}(v_i), \delta^2 \rangle= \langle \hat{A}(\hat{v}_i), \delta^2 \rangle = 0.$
Now it is easy to verify that 
$$A (u_i) =  (0, \ldots, 0,  \underline{k}, -\underline{k},0, \ldots, 0)= v_i, \quad \forall i=1, \ldots \ell-1,$$
$$A (v_i) =  (0, \ldots, 0,  -\underline{j}, \underline{j},0, \ldots, 0)= - u_i, \quad \forall i=1, \ldots \ell-1,$$
 $$A (\hat{u}_i) =(0, \ldots, 0,  \underline{k}, -\underline{k},0, \ldots, 0)=  \hat{v}_i, \quad \forall i=\ell, \ldots n-3,$$
 $$A (\hat{v}_i) =(0, \ldots, 0, - \underline{j}, \underline{j},0, \ldots, 0)=-  \hat{u}_i, \quad \forall i=\ell, \ldots n-3,$$
Also  $$ A ( -J(w)) = b_1 v_1 + \ldots + b_{k-2} \hat{v}_{k-2} + b w$$ and $$A(w) = a_1 u_1 + \ldots + a_{n-3} \hat{u}_{n-3} +a (-J(w))$$  $a_i, b_i,a,b \in \R,$ and so the matrix $A$ is:
{\small
\begin{displaymath}
A= \left( \begin{array}{ccccc|ccccc|cc}
0&1& & & & & & & & & 0& a_1\\
-1 & 0& & & & & & & & & b_1&0\\
 & & \ddots & & & & & & & & \vdots & \vdots\\
 & & & 0 & 1& & & & & & 0 & a_{\ell-1}\\
 & & & -1 &0 & & & & & & b_{\ell-1} & 0\\
\hline
& &  & & & 0 & -1 &  & & & 0 & a_{\ell}\\
 & & & & & 1 &0 & & & & b_{\ell}& 0\\
& & & & & & & \ddots& & &\vdots & \vdots \\
 & & & & & & & & 0 & -1& 0 & a_{n-3}\\
& & & & & & & & 1& 0 & b_{n-3} & 0\\
\hline
 & & & & & & & & & & 0& a\\
& & &  & && &  & &  & b & 0\\
\end{array} \right).
\end{displaymath}}  

Hence $A^2= - Id \iff ab =-1.$

{\bf Determine $a$ and $b$.}
First of all we can notice that the last two components of $A(-J(w))$ and $A(w)$ are enough to determine $a$ and $b$ because the vectors $u_i, \hat{u}_i, v_i, \hat{v}_i$ have no influence on the final components.\\
Observing that $\langle \hat{A}(-J(w)), \delta^3 \rangle = 0$ (because $-J(w)$ has no nonzero components along $\k$), it follows from \eqref{lis1} and \eqref{lis2} that:
$$ A(-J(w))= \Big( \ldots, \frac{\langle \hat{A} (-J(w)), \delta^2 \rangle}{\| \delta^2 \| ^2} \frac{r_{n-1} \| \mu \| }{r_{n-1} + r_{n}} \k,  \frac{\langle \hat{A} (-J(w)), \delta^2 \rangle}{\| \delta^2 \| ^2} \frac{r_{n} \| \mu \| }{r_{n-1} + r_{n}} \k \Big).$$
Now, recalling that $w = ( \lambda \k, \ldots, \lambda \k, \gamma \k, \ldots, \gamma \k, -r_{n-1} \k, -r_{n} \k)$ we get 
\begin{equation}\label{b}
b= -  \frac{\langle \hat{A} (-Jw), \delta^2 \rangle}{\| \delta^2 \| ^2} \frac{ \| \mu \| }{r_{n-1} + r_{n}}.
\end{equation} 

Similarly it is possible to observe that $\langle \hat{A} (w), \delta^2\rangle =0 ,$ thus

$$ A(w) = \Big( \ldots , - \frac{\langle \hat{A}(w), \delta^3 \rangle}{\| \delta ^3 \|^2 } \big( r_{n} \sin \theta \i + ( \| \mu \| - r_{n} \cos \theta) \j \big),$$ \vspace{-2mm}
$$ \quad \quad \quad \quad \quad \quad - \frac{\langle \hat{A}(w), \delta^3\rangle}{\| \delta ^3 \|^2 } \big( - r_{n} \sin \theta \i +  r_{n} \cos \theta \j \big) \Big).$$
Comparing $A(w)$ with the last two components of $-Jw$ we get:
\begin{equation}\label{a}
a= -  \frac{\langle \hat{A} (w), \delta^3\rangle}{\| \delta^3 \| ^2}.
\end{equation}

Remember that $\xi(-J(w))= \sum_{i=1}^{n-2} (-Jw)_i = \ell \lambda - (n-\ell-2) \gamma = - \| \mu \| \j.$
Then $$ \langle \hat{A} (-J(w)), \delta^2 \rangle = \sum_{j=1}^{\ell} \Big( \frac{r_j}{ \| \mu \|} \| \mu \| + \la \Big) - \sum_{j= \ell+1}^{n-2} \Big(- \frac{r_j}{ \| \mu \|} \| \mu \| - \gamma \Big)=$$
$$ \sum_{j=1}^{n-2} r_j + \ell \la + (n-\ell-2) \gamma =  \sum_{j=1}^{n-2} r_j +r_{n-1} + r_{n}= 2.$$

$$ \| \delta^2 \|^2 =  \sum_{j=1}^{n-2} r_j + \frac{\| \mu \|^2 (r_{n-1} + r_{n})}{(r_{n-1}+ r_{n})^2} = \frac{(r_{n-1}+ r_{n}) \sum_{j=1}^{n-2} r_j + \| \mu \|^2}{r_{n-1}+ r_{n}}.$$

So $$b = - \frac{2 \| \mu \|}{(r_{n-1}+ r_{n}) \sum_{j=1}^{n-2} r_j + \| \mu \|^2}.$$

Similarly, $ \xi (w) = \ell \lambda + (n-\ell -2) \gamma = - \frac{1}{2} (\| \mu \| - r_{n-1} - r_{n})+ \frac{1}{2} (\| \mu \| + r_{n-1} + r_{n})= r_{n-1} + r_{n}.$
$$ \langle \hat{A}(w), \delta^3 \rangle = \sum_{j=1}^{\ell} \Big( - \frac{r_j}{ \| \mu \|} (r_{n-1}+ r_{n}) + \la \Big) + \sum_{j=\ell+1}^{n-2} \Big( - \frac{r_j}{ \| \mu \|} (r_{n-1}+ r_{n}) - \gamma \Big) = $$
$$ - \frac{r_{n-1} + r_{n}}{\| \mu \|}  \sum_{j=1}^{n-2} r_j + \ell \la - (n-\ell-2) \gamma = - \frac{(r_{n-1} + r_{n})  \sum_{j=1}^{n-2} r_j + \| \mu \|^2}{\| \mu \|}.$$

$\| \delta^3 \|^2 = \sum_{j=1}^{n-2} r_j +r_{n-1} + r_{n}=2.  $\\
So 

\begin{equation}\label{a1}
a= \frac{(r_{n-1} + r_{n})  \sum_{j=1}^{n-2} r_j + \| \mu \|^2}{2 \| \mu \|}.
\end{equation} 

It is now straightforward to verify that $ab =-1,$ and so $A^2= - Id.$

\subsubsection{Conclusions}
$\mathcal{B}_2 = \{ \ldots, 
u_i, Au_i, \ldots, \hat{u_i}, A \hat{u}_i,\ldots, -Aw, w \}$ is a $A$-positive basis of $T_{[P]}M_r.$
Then $\mathcal{B}_2 $ is also $J$-positive if and only if the determinant of the matrix of the change of base $M_{\mathcal{B}_2\mathcal{B}_1}=M$ is positive. In this case the orientation induced by $ A$ is positive (or concord with the one induced by $J$).

Let $-Aw = \alpha_1 u_1 + \ldots + \alpha_{n} \hat{u}_{n} + \alpha (- Jw).$ From the description of $A$ given in the previous section the coordinate change matrix  is 
{\small
\begin{displaymath}
M = \left(\begin{array}{ccccc|ccccc|cc}

1 & & & & & & & & & & \alpha_1 & 0 \\
 & 1& & & & & & & & & 0 & 0 \\
 & & 1 & & & & & & & & \alpha_2 & 0 \\
 & & & \ddots & & & & & & & \vdots & \vdots\\
 & & & & 1& & & & & & 0& 0\\
\hline
 & & & & & 1& & & & & \alpha_{\ell} & 0\\
 & & & & & & -1& & & & 0 &0\\
 & & & & & & & \ddots & & & \vdots & \vdots\\
 & & & & & & & & 1&  & \alpha_{n-4} & 0\\
 & & & & & & & & &  -1 & 0 &0 \\
\hline
 & & & & & & & & & & \alpha & 0\\
 & & & & & & & & & & 0 & 1\\ 

\end{array} \right)
\end{displaymath}}

So $det M = (-1)^{n-3-\ell} \alpha.$

Now, since $\alpha = -a,$  it follows from \eqref{a1} that $$\alpha= - \frac{(r_{n-1} + r_{n})  \sum_{j=1}^{n-2} r_j + \| \mu \|^2}{2 \| \mu \|} < 0. $$
So  sgn$(\det(M)) = (-1)^{n-\ell}$ and $[P]$ contributes to the cobordism class of $M_r$ with $ (-1)^{n-\ell} \C\P^{n-3}.$

\begin{rmk}
We already observed that if $\ell=n-2$ then the vectors ${u_i, v_i, \hat{u}_{i}, \hat{v}_{i}}$ form a basis of $T_{[P]}M_r.$ In this case it is straightforward to see that the orientations induced by $A$ and $J$ agree, i.e., $\det(M)=1.$ So the result  sgn$(\det(M)) = (-1)^{n-\ell}$ holds for each $\ell=1, \ldots, n-2.$
\end{rmk}  

\begin{rmk}\label{l}
We assumed in \eqref{ipotesi1} and \eqref{ipotesi2} that the first $\ell$ edges are oriented as the $x$-axis and the following $n-\ell-2$ are conversely oriented. We already pointed out that this assumption is equivalent to choosing a particular class $[P].$ Let us consider another fixed point $[Q]= [\vec{e}]$ of type I. Because the first $n-2$ edges are on the $x$-axis and $\mu=e_1+ \ldots + e_{n-2}=\| \mu \| \i,$ then there exist two subsets $I$ and $I^c$ of $\{ 1, \ldots, n-2 \}$ such that  $I \cap I^c= \emptyset,$ $I \cup I^c =\{ 1,\ldots, n-2\},$ and such that $$ e_i=(r_i, 0,0) \quad \forall i \in I$$ $$  e_i=(-r_i, 0,0) \quad \forall i \in I^c.$$ Let $\ell$ be the cardinality of $I$. If $I=\{ 1, \ldots \ell\}$ then this is the case that we studied in detail. Otherwise, the proof extends word by word just changing $\{1, \ldots, \ell\}$ with $I$ and $\{ \ell +1, \ldots, n-2\}$ with $I^c.$ So a generic point $[Q]$ contributes to the cobordism class of $M_r$ with $(-1)^{n- \ell}\C\P^{n-3}$ where $\ell$ is the number of forward tracks, i.e. $\ell= \sharp \{ e_j \mid e_j \cdot \mu >0 \}.$ 
\end{rmk}

\begin{rmk}
If $n=2m$ then the odd dimensional projective space $\C\P^{n-3}$ is the total space of a circle bundle over the quaternion projective space $\H\P^{m-2},$ and hence is the boundary of the associated disk bundle. So, if $n$ is even $M_r \sim 0.$

\end{rmk}

\begin{rmk}\label{complex}
Since in Section \ref{dim} the complex structure on $M_r$ is analyzed in detail, one might hope that in Theorem \ref{t1} complex cobordism is actually under consideration. Theorem \ref{t1} does not hold when replacing $S^1$-equivariant cobordism with Hamiltonian complex cobordism (cf. \cite{ggk}, Chapter 2, Section 5). In fact Hamiltonian complex cobordant spaces have the same quantization (cf \cite{ggk}) and it is easy to provide two $S^1$-cobordant spaces such that the dimensions of their geometric quantizations are different.
As an example, consider the polygon spaces associated to the length vectors $r_1=(2,3,8,2,4)$ and $r_2=(4,6,16,4,8).$ Both are cobordant to $\C\P^2$ (they are obtained just by rescaling the length vector as in the first example in next Section), but their geometric quantizations are different. In fact, for $M_{r_1}$ the dimension of the space of holomorphic sections of the pre-quantum line bundle is 3, while for $M_{r_2}$ is 28. Details will appear in \cite{bos}. The question is still open about complex cobordism.

\end{rmk}

\section{Some examples}\label{es_cob}

For each length vector $r$ we will analyse which index sets $I$ are $r$-admissible (see Definition \ref{df}). We point out that if $I$ does not satisfy the closing conditions (system \ref{tr}), also its complement $I^c:=\{ 1, \ldots,5\}\setminus I$ does not. Moreover if $I$ is admissible then $I^c $ can't be admissible too, in fact just one between $\sum_{i \in I} \epsilon_i r_i>0$ and $\sum_{i \in I^c} \epsilon_i r_i>0 = - \sum_{i \in I} \epsilon_i r_i>0$ is true.
 In this section we will denote an element of  $(M_r^{S^1})_{isol}$ just by giving the signs of the vectors $e_1, e_2, e_3,$ so for example $++-$ say us $$e_1 = (r_1,0,0), e_2 =(r_2,0,0), e_3 =(-r_3,0,0),$$ and the remaining edges $e_4, e_5$ are determined up to rotations. So the class (uniquely) determined in $M_r$ by $++-$ will be denoted by $P_{++-}.$

In the examples studied the vector of lengths is not normalized (i.e. $\sum_i r_i \ne 2$). This will keep the notation cleaner and is not restrictive because $M_r \simeq M_{\la r}$ for all $\la \in \R^+.$

Each of the following examples is obtained by its previous one by crossing a inner wall in $\Xi,$ or equivalently (because $M_r$ is toric for $n=5$) by chopping off a vertex in the moment polytope $\mu_{T^2}(M_r).$ We will go back to this remark at the end of this section, but this should be kept in mind as looking at the moment polytope.
\begin{enumerate}
\item $ \textbf{ r=(1,1.5,4,1,2)}:$
$M_r$ is a smooth manifold, and the only $r$-admissible set is $I$ $\{ 3\};$ $\ell = | \{ 3\}|= 1$ so the  $\C\P^2$ produced with the surgery around $P_{--+}$ comes with  sign $ (-1)^{5-1} = 1,$ i.e. it comes with the standard orientation.

Other configurations are not $r$-admissible, in fact $\{1,2,3 \},$ $\{1,3 \},$ $\{2,3 \}$ (and their complements) do not satisfy the closing condition (i.e. system \eqref{tr}); $\{ 1,2\}$ is also not $r$-admissible. In fact it is the complement of $\{ 3\}.$
Thus $$M_r \sim \C\P^2. $$
In this case the image $\mu_{{T^2}}(M_r)$ is as in figure \ref{espap1}-(A).

\begin{figure}[htbp]
\psfrag{1}{\footnotesize{$y=x+4$}}
\psfrag{2}{\footnotesize{$y=-x+4$}}
\psfrag{3}{\footnotesize{$y=x-4$}}
\psfrag{a}{\footnotesize{$0.5$}}
\psfrag{a1}{\footnotesize{$1.5$}}
\psfrag{b}{\footnotesize{$2.5$}}
\psfrag{c}{\footnotesize{$1$}}
\psfrag{d}{\footnotesize{$3$}}
\psfrag{x}{\footnotesize{$x$}}
\psfrag{y}{\footnotesize{$y$}}
\psfrag{A}{\footnotesize{(A):$\mu_{{T^2}}(M_r)$ for $r=(1,1.5,4,1,2)$, $M_r \sim \C\P^2.$}}
\psfrag{B}{\footnotesize{(B):$\mu_{{T^2}}(M_r)$ for $r=(0.5,2,4,1,2)$, $M_r \sim 0.$}}
\includegraphics[width=12cm]{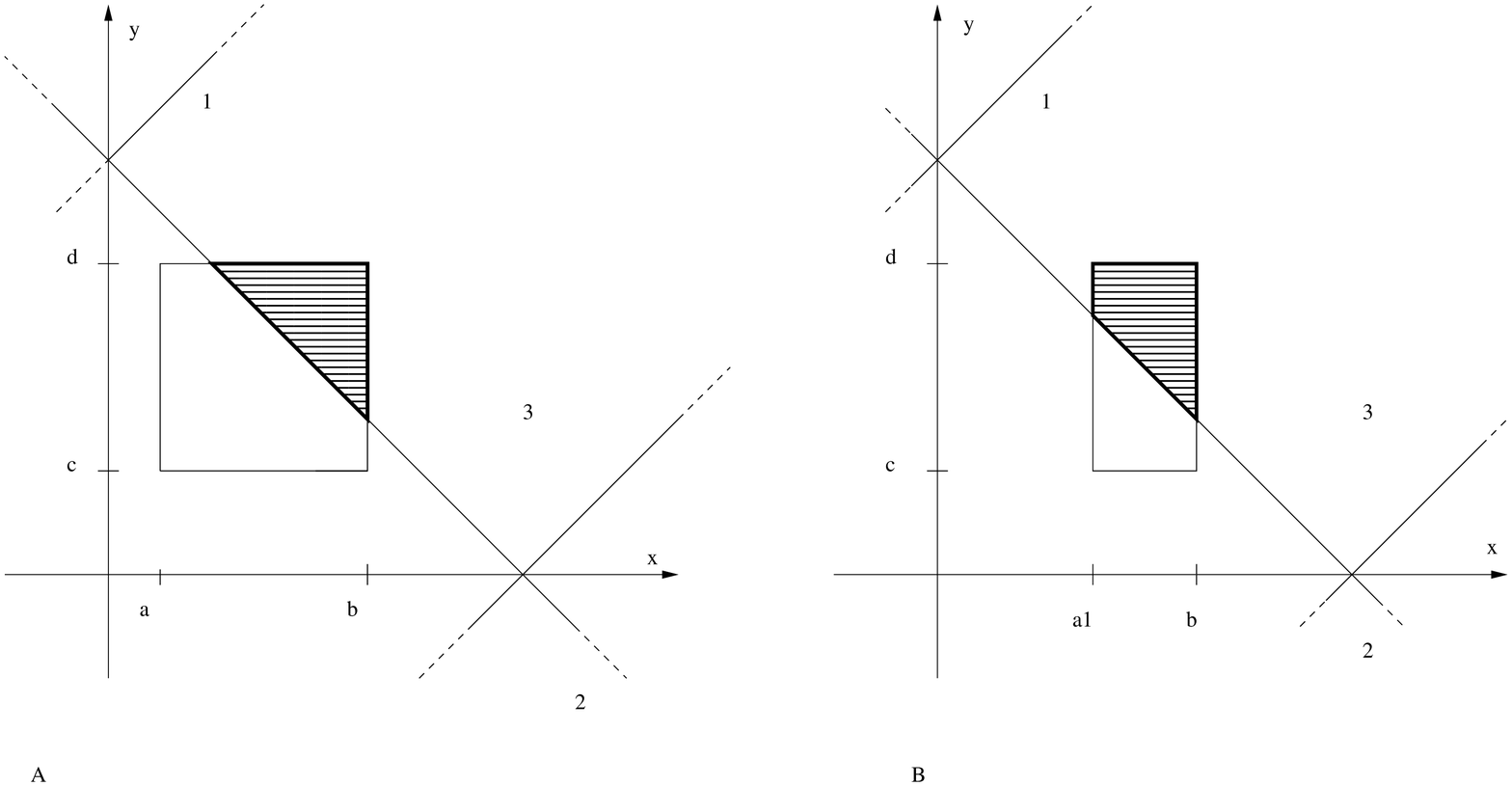}
\caption{Examples 1 and 2}
\label{espap1}
\end{figure}

\item $ \textbf{ r=(0.5,2,4,1,2)}:$
$M_r$ is a smooth manifold, and the $r$-admissible index sets are:\begin{itemize}

\item[$\{2,3\}$]   $\Rightarrow$ $l=2$ $\Rightarrow$ on $T_{P_{-++}}M_r,$ $A=-J$ and $\C\P^2 $ comes with the orientation opposite to the standard one.

\item[$\{3\}$]  $\Rightarrow$ $l=3$  $\Rightarrow$ on $T_{P_{+++}}M_r,$ $A=J$ and $\C\P^2 $ comes with the standard orientation.

\end{itemize}
Thus $$M_r \sim  \C\P^2 \amalg - \C\P^2 \sim 0. $$

For this choice of $r$ the image $\mu_{{T^2}}(M_r)$ is as in Figure \ref{espap1}-(B).

\item $\textbf{ r=(2,0.5,4,0.5,2.5)}$
$M_r$ is a smooth manifold, and the only $r$-admissible set is  $I=\{ 2,3\},$ of cardinality  $\ell=2.$  So on $T_{P_{-++}}M_r,$ $A= (-1)^{n-\ell}= -J$ and $\C\P^2 $ comes with the opposite orientation to the standard one. There are no other $r$-admissible sets. In fact $\{1,2,3 \},$ $\{ 1,2\},$ $\{ 3\},$ $\{ 1,3\}$ and their complements do not satisfy system \eqref{tr}, and neither does $\{ 1\}$ (it is the complement of $\{ 2,3\}$). 

Thus $$M_r \sim  - \C\P^2.$$

\begin{figure}[htbp]
\psfrag{1}{\footnotesize{$y=x+4$}}
\psfrag{2}{\footnotesize{$y=-x+4$}}
\psfrag{3}{\footnotesize{$y=x-4$}}
\psfrag{a}{\footnotesize{$1.5$}}
\psfrag{b}{\footnotesize{$2.5$}}
\psfrag{b1}{\footnotesize{$5.5$}}
\psfrag{c1}{\footnotesize{$1$}}
\psfrag{c}{\footnotesize{$2$}}
\psfrag{d}{\footnotesize{$3$}}
\psfrag{x}{\footnotesize{$x$}}
\psfrag{y}{\footnotesize{$y$}}
\psfrag{A}{\footnotesize{(A):$\mu_{{T^2}}(M_r)$ for $r=(2,0.5,4,0.5,2.5),$}}
\psfrag{C}{\footnotesize{ $M_r \sim - \C\P^2.$}}
\psfrag{B}{\footnotesize{(B):$\mu_{{T^2}}(M_r)$ for $r=(2,3.5,4,1,2),$}}
\psfrag{D}{\footnotesize{ $M_r \sim - 2\C\P^2.$}}
\includegraphics[width=12cm]{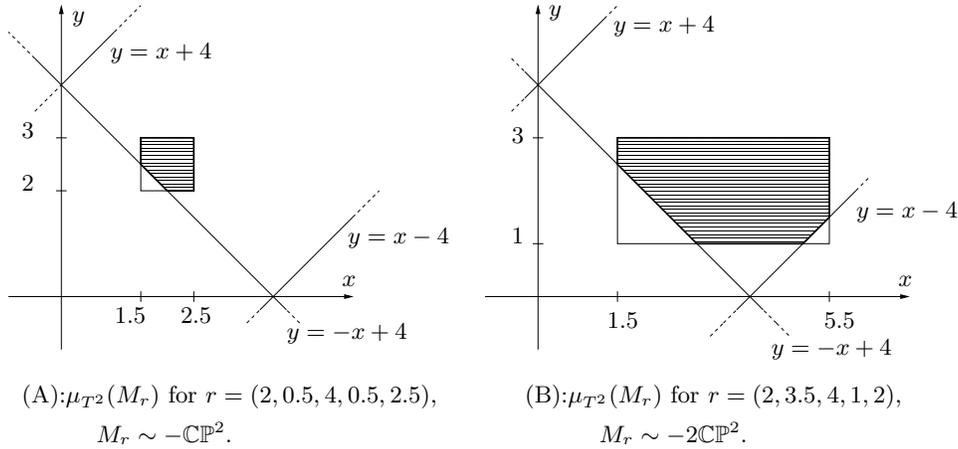}
\caption{Examples 3 and 4.}
\label{espap2}
\end{figure}

The image $\mu_{{T^2}}(M_r)$ of $M_r$ is then the 5-sided polytope in Figure \ref{espap2}-(A).

\item $\textbf{  r=(2,3.5,4,1,2)}$\\
$M_r$ is a smooth manifold, and the $r$-admissible index subsets are $\{ 1,2\}$ and $\{1,3 \}.$ Both of them have cardinality $\ell=2,$ and so they contribute to the cobordism class of $M_r$ with two copies of $- \C\P^2,$ i.e.
$$M_r \sim  - \C\P^2 \amalg - \C\P^2 \sim  -2 \C\P^2. $$

As before, it is immediate to draw the polytope $\mu_{{T^2}}(M_r)$, see Figure \ref{espap2}-(B).

\item $\textbf{ r=(2,3.5,4,3.5,2.5)}$\\
$M_r$ is a smooth manifold, and the $r$-admissible sets are $\{1,2 \},$ $\{1,3 \},$ $\{ 2,3\}.$ All of them have cardinality $\ell =2,$ so the corresponding fixed points contribute to the cobordism class of $M_r$ with a $- \C\P^2.$
Thus $$M_r \sim - \C\P^2 \amalg - \C\P^2 \amalg - \C\P^2 \sim -3 \C\P^2. $$

\begin{figure}[htbp]
\begin{center}
\psfrag{1}{\footnotesize{$y=x+4$}}
\psfrag{2}{\footnotesize{$y=-x+4$}}
\psfrag{3}{\footnotesize{$y=x-4$}}
\psfrag{a}{\footnotesize{$1.5$}}
\psfrag{b}{\footnotesize{$5.5$}}
\psfrag{c}{\footnotesize{$1$}}
\psfrag{d}{\footnotesize{$6$}}
\psfrag{4}{\footnotesize{$4$}}
\psfrag{x}{\footnotesize{$x$}}
\psfrag{y}{\footnotesize{$y$}}
\psfrag{A}{\footnotesize{$\mu_{{T^2}}(M_r)$ for $r=(2,3.5,4,3.5,2.5)$, $M_r \sim -3 \C\P^2$}}
\includegraphics[width=5cm]{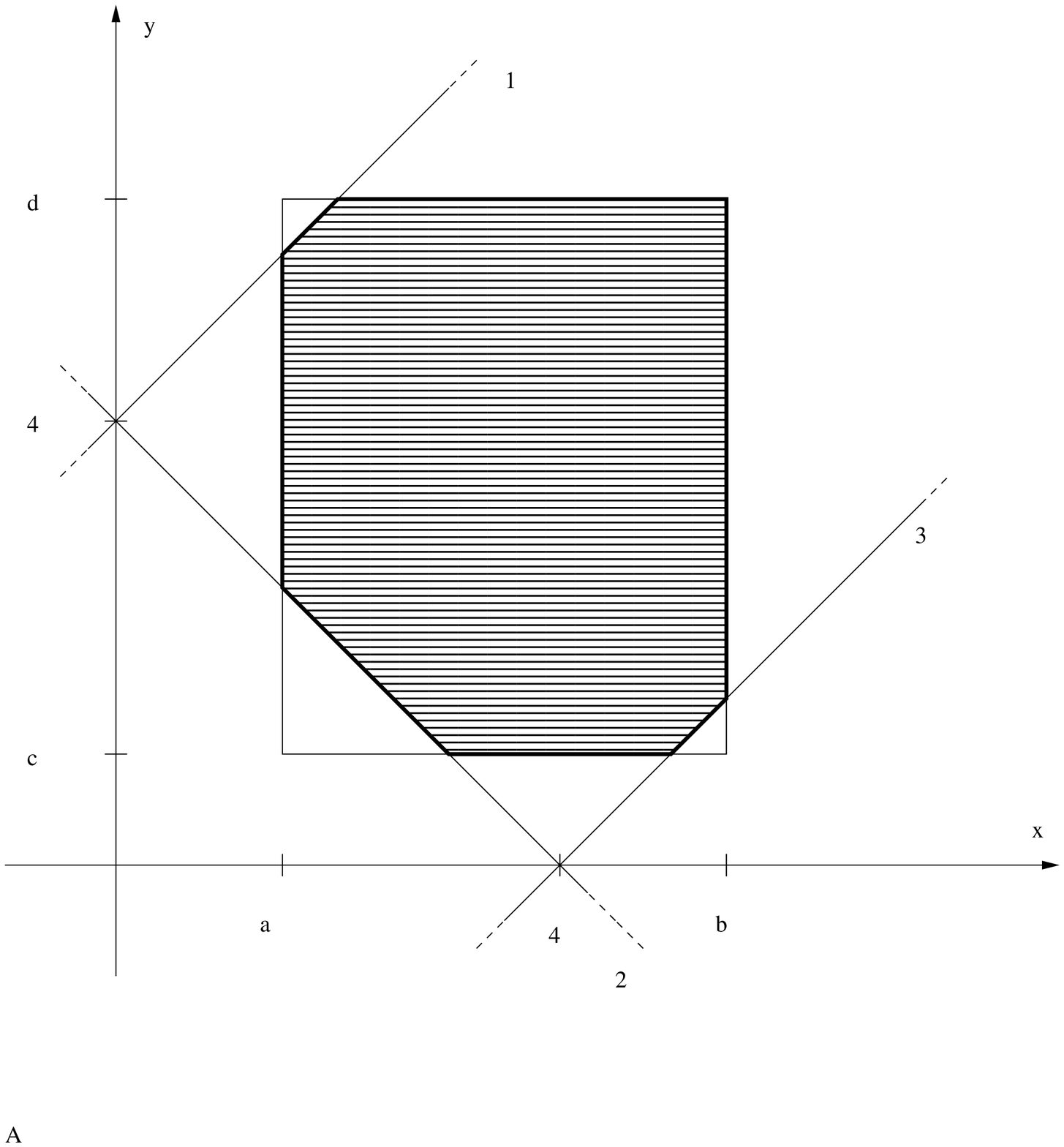}
\end{center}
\caption{Example 3}
\label{figura8}
\end{figure}
\noindent
For this choice of the length vector $r$ the image $\mu_{{T^2}}(M_r)$ is as in Figure \ref{figura8}.

\item $ \textbf{  r=(5,1,4,5,1)}:$
$M_r$ is a smooth manifold. For this choice of $r$ the set $(M_r^{S^1})_{isol}$ is empty. In fact none of the index sets $\{1,2,3 \},$ $\{1,2 \},$  $\{ 1,3\},$  $\{ 2,3\}$ are $r$-admissible, thus $$M_r \sim 0$$ 

\begin{figure}[htbp]
\psfrag{4}{\footnotesize{$4$}}
\psfrag{6}{\footnotesize{$6$}}
\psfrag{0.5}{\footnotesize{$0.5$}}
\psfrag{6.5}{\footnotesize{$6.5$}}
\psfrag{3.5}{\footnotesize{$3.5$}}
\psfrag{2.5}{\footnotesize{$2.5$}}
\psfrag{1'}{\footnotesize{$y=x+4$}}
\psfrag{2'}{\footnotesize{$y=-x+4$}}
\psfrag{3'}{\footnotesize{$y=x-4$}}
\psfrag{1}{\footnotesize{$y=x+3.5$}}
\psfrag{2}{\footnotesize{$y=-x+3.5$}}
\psfrag{3}{\footnotesize{$y=x-3.5$}}
\psfrag{x}{\footnotesize{$x$}}
\psfrag{y}{\footnotesize{$y$}}
\psfrag{A}{\footnotesize{(A):$\mu_{{T^2}}(M_r)$ for $r=(5,1,4,5,1),$ $M_r \sim 0.$}}
\psfrag{B}{\footnotesize{(B):$\mu_{{T^2}}(M_r)$ for $r=(1,1.5,3.5,3,3.5),$}}
\psfrag{C}{\footnotesize{ $M_r \sim 0.$}}
\includegraphics[width=11cm]{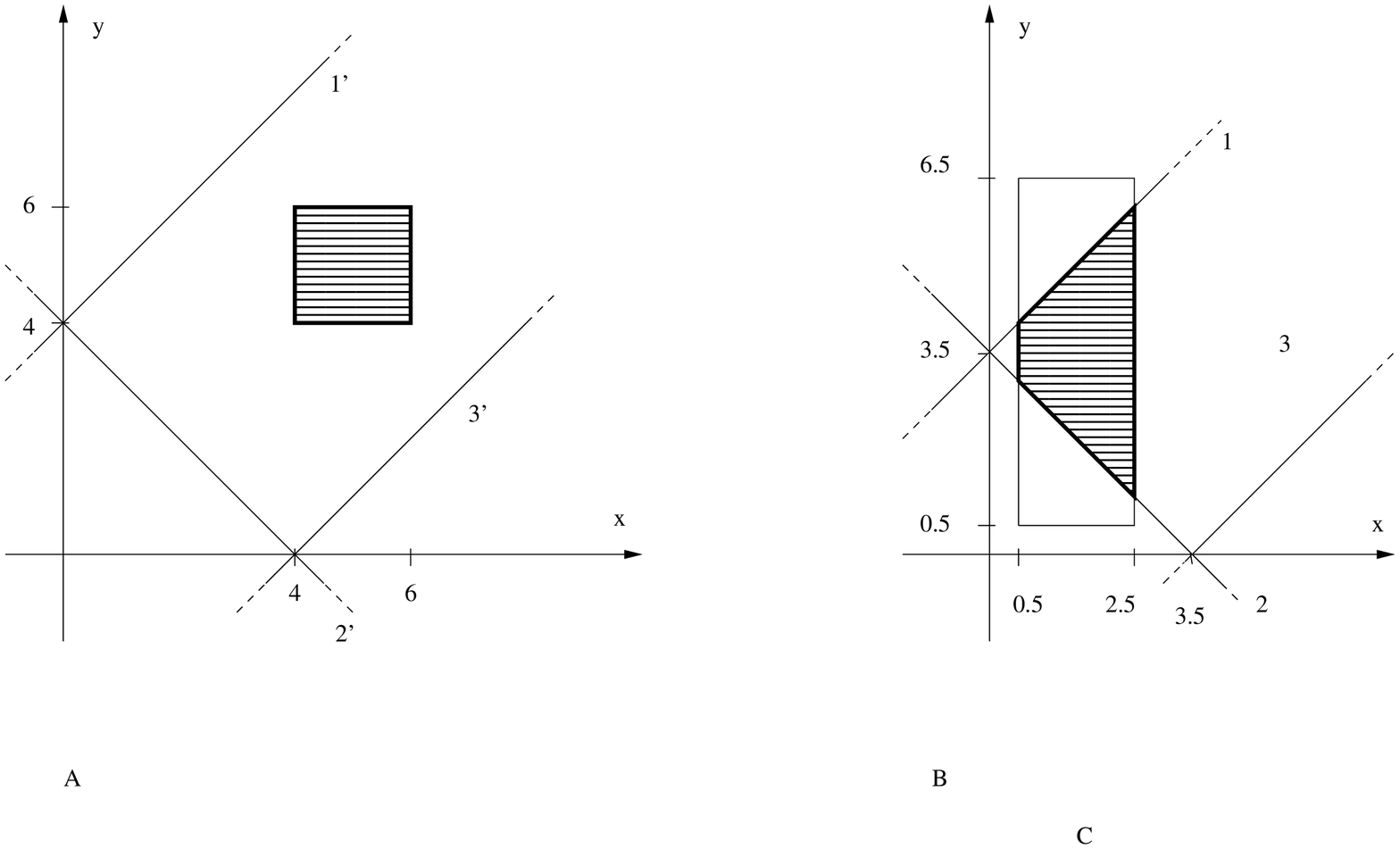}
\caption{Examples 6 and 7.}
\label{espap3}
\end{figure}
and $ \mu_{{T^2}}(M_r)$ is as in Figure \ref{espap3}-(A).

\item $\textbf{ r=(1,1.5,3.5,3,3.5)}:$
$M_r$ is a smooth manifold, and the $r$-admissible index sets are $\{1,2,3 \},$ $\{ 1,3\},$ $\{ 2,3\},$ $\{ 3\}.$ Of these, two have even cardinality and two have odd cardinality, so

$$M_r \sim  \C\P^2 \amalg \C\P^2 \amalg - \C\P^2 \amalg - \C\P^2 \sim 0$$

and the moment image $\mu_{{T^2}}(M_r)$ is as in Figure \ref{espap3}-(B).
\end{enumerate}

Note that the examples above are built by ``chopping off a vertex'' at each step. This has a formal description: ``chopping a vertex'' corresponds to a wall crossing in $\Xi.$ For example the passage from $r$'s such that $\mu_{{T^2}}(M_r)$ is as in Figure \ref{espap1}-(A) to $r$'s such that $\mu_{{T^2}}(M_r)$ is as in Figure \ref{espap1}-(B) corresponds to the crossing of the wall $r_1+r_3= r_2+r_4+r_5.$

This is an expected phenomenon. In fact in the 4-dimensional case ($n=5$) crossing a wall has the effect of blowing up a fixed point (or blowing down, depending on the wall-crossing direction). For this we refer to \cite{io}, where we describe how the diffeotype of $M_r$ changes as $r$ crosses a wall in $\Xi.$

By the notion of admissibility for an index subset $I$, it follows that for $n=5$ these are all the possible cobordism types of $M_r.$ Moreover for $r$'s in the same region of regular values $\Delta \subset \Xi,$ the moment polytope $\mu_{{T^2}}(M_r)$ has the same ``shape'', and its number of edges is an invariant of cobordism.

\begin{rmk}
The manifolds $M_r$ as in Examples 2 and 6-7 have the same cobordism type ($M_r \sim 0$) but different diffeotype, and thus different symplectomorphism type. The moment polytope $\mu_{{T^2}}(M_r)$ contains all the informations
needed to recover the ($T^2$-equivariant) symplectomorphism type (see Delzant \cite{De}, Lerman-Tolman \cite{lt}). For $M_r$'s such that the moment polytope is as in Example 6,and more generally when the opposite edges of the polytope $\mu_{{T^2}}(M_r)$ are parallel, it is well-known that the manifold $M_r$ is diffeomorphic to $\C\P^1 \times \C\P^1$ (see, for example, \cite{cannas}).

Let us now analyze the cases such that the moment polytope has shape as in Figures \ref{espap1}-(B) and \ref{espap3}-(B). Karshon \cite{karshon} finds  explicitly the ($ S^1$-equivariant) symplectomorphism types for these
examples, and, in particular, establishes when they are the same. A possible way to see it is the following: because $\mu_{{T^2}}(M_r)$ is the intersection of the regions $I$ and $\Upsilon,$ its edges are either horizontal, vertical or have slope $\pm1.$ Moreover  there is always a pair of opposite edges which are parallel.  If the normals to the other opposites edges (the non-parallel ones) generate the lattice $\mathbb{Z}^2$ then $M_r$ is diffeomorphic to  $\C\P^2$ blown up at a point; otherwise, if they generate a sublattice of  $\mathbb{Z}^2$ of index two, it is diffeomorphic to  $S^2 \times S^2 \simeq \C\P^1 \times \C\P^1.$ 
This can also be seen by analyzing the graphs associated to the polytopes  as in Figures \ref{espap1}-(B) and \ref{espap3}-(B) (see \cite{Yael}, section 2).

\end{rmk}

\noindent
Center for Mathematical Analysis, Geometry and Dynamical Systems\\
Departamento de Matematica, Instituto Superior Tecnico,\\
1049-001 Lisboa, Portugal.\\
 Fax: (351) 21 8417035\\

Email:amandini@math.ist.utl.pt

\end{document}